\documentclass[12pt]{amsart}

\usepackage[utf8]{inputenc}
\usepackage{amsfonts}
\usepackage{amsmath}
\usepackage{graphicx}
\usepackage{float}
\usepackage[dvipsnames]{xcolor}
\usepackage{hyperref}
\usepackage{tikz-cd}

\usepackage{amssymb}
\usepackage{enumitem}
\usepackage{mathrsfs}

\usepackage{tikz}
\usetikzlibrary{topaths}
\usetikzlibrary{calc}

\usepackage{a4wide}

\usepackage{empheq}

\newtheorem{theorem}{Theorem}[section]
\newtheorem*{theorem*}{Theorem}
\newtheorem{lemma}[theorem]{Lemma}
\newtheorem{proposition}[theorem]{Proposition}
\newtheorem*{proposition*}{Proposition}
\newtheorem{corollary}[theorem]{Corollary}
\newtheorem*{corollary*}{Corollary}

\newtheorem{cit}[theorem]{Citation}
\newtheorem*{conjecture*}{Conjecture}
\newtheorem{question}[theorem]{Question}
\newtheorem*{question*}{Question}

\newtheorem{lettertheorem}{Theorem}

\theoremstyle{definition}
\newtheorem{definition}[theorem]{Definition}
\newtheorem*{definition*}{Definition}

\newtheorem{observation}[theorem]{Observation}

\newcommand{\N}{\mathbb{N}}
\newcommand{\Z}{\mathbb{Z}}

\newcommand{\R}{\mathbb{R}}

\DeclareMathOperator{\dlk}{\mathrm{lk}_\downarrow}
\DeclareMathOperator{\dflk}{\mathrm{lk}_\downarrow^\partial}
\DeclareMathOperator{\dclk}{\mathrm{lk}_\downarrow^\delta}
\DeclareMathOperator{\med}{\mathtt{med}}
\DeclareMathOperator{\diam}{\mathtt{diam}}
\DeclareMathOperator{\len}{\mathtt{len}}

\DeclareMathOperator{\F}{\mathrm{F}}
\DeclareMathOperator{\FP}{\mathrm{FP}}

\DeclareMathOperator{\Cay}{Cay}

\newcommand{\Rips}{\mathcal{VR}}

\numberwithin{equation}{section}

\begin{document}

\title{Word length, Morse theory, and Vietoris--Rips complexes}
\date{\today}
\subjclass[2020]{Primary 20F65;   
                 Secondary 57M07} 

\keywords{Vietoris--Rips complex, discrete Morse theory, right-angled Artin group}

\author[S.~Hulbert]{Seth Hulbert}
\address{Department of Mathematics and Statistics, University at Albany (SUNY), Albany, NY}
\email{swhulbert@albany.edu}

\author[M.~C.~B.~Zaremsky]{Matthew C.~B.~Zaremsky}
\address{Department of Mathematics and Statistics, University at Albany (SUNY), Albany, NY}
\email{mzaremsky@albany.edu}

\begin{abstract}
We present a discrete Morse theoretic approach to proving high connectivity or contractibility of a Vietoris--Rips complex using distance to a fixed point as an initial measurement. In particular we focus on the case of a finitely generated group using word length. As a proof-of-concept application we prove that $\Rips_2(A_\Gamma)$ is contractible for $A_\Gamma$ a right-angled Artin group on a triangle-free graph $\Gamma$. We also prove an interesting sufficient condition for a group to be finitely presented that only requires checking connectivity of certain finite complexes.
\end{abstract}

\maketitle
\thispagestyle{empty}

\section{Introduction}\label{sec:intro}

The Vietoris--Rips complex $\Rips_t(X)$ of a metric space $X$ is the simplicial complex with vertex set $X$ and simplices determined by points being within distance $t$ of each other. This is a highly useful construction, in particular in geometric group theory, where a seminal result of Rips shows that for a hyperbolic group $G$, $\Rips_t(G)$ is contractible for large enough $t$. Since $G$ acts geometrically (i.e., properly and cocompactly) on $\Rips_t(G)$, this has implications for the topological properties of the group.

A question that has attracted much interest is, for which other groups does this happen, beyond hyperbolic ones? That is, given a finitely generated group $G$ with word metric coming from a finite generating set, is $\Rips_t(G)$ contractible for some $t<\infty$? Of course a necessary condition is that $G$ act geometrically on a contractible complex, but little else is known. Even the seemingly easy example of $\Z^n$ ($n\ge 2$) with the standard generating set was only recently handled, by Virk in \cite{virk25}, and in general the problem is wide open.

\begin{question}
Let $G$ be a finitely generated group that admits a proper cocompact action on some contractible simplicial complex. Must some $\Rips_t(G)$ ($t<\infty$) be contractible?
\end{question}

Note that this question implicitly asks for both a finite generating set and a value $t<\infty$ such that $\Rips_t(G)$ is contractible viewing $G$ with the word metric coming from that finite generating set. One can thus ask the stronger question of whether for every finite generating set such a $t$ exists.

\medskip

In this paper we set up a general sufficient criterion for $\Rips_t(G)$ to be highly connected or contractible, coming from Bestvina--Brady discrete Morse theory (see, e.g., \cite{bestvina97,zaremsky22}). This criterion is ``local'' in that it is about finite subsets of $G$ of a fixed diameter and at constant distance to the identity. Before stating the criterion, let us establish notation. Let $G$ be a finitely generated group with a fixed word metric $d$ coming from a finite generating set. For non-empty finite $S\subseteq G$ the \emph{diameter} $\diam(S)$ of $S$ is the maximum $d(g,g')$ for $g,g'\in S$. Let $\len$ be the word length function $\len(g)=d(g,1)$, and write $\len_{min}(S)$ for the minimum $\len$ value on $S$. Write $D_t(S)$ for the geometric realization of the poset of all finite $S'\supsetneq S$ (with the inclusion relation) such that $\diam(S')\le t$ and $\len_{min}(S')<\len_{min}(S)$. Now we can state the criterion, which is proved as Corollary~\ref{cor:general_morse}.

\begin{lettertheorem}\label{thrm:main_general}
Suppose $D_t(S)$ is $(n-|S|-1)$-connected for all non-empty finite $\{1\}\ne S\subseteq G$ with $\diam(S)\le t$ such that $\len$ is constant on $S$. Then $\Rips_t(G)$ is $(n-1)$-connected. If all such $D_t(S)$ are contractible then $\Rips_t(G)$ is contractible.
\end{lettertheorem}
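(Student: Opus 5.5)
The plan is to run Bestvina--Brady discrete Morse theory on the barycentric subdivision $\Rips_t(G)'$. This is the geometric realization of the poset of non-empty finite $S\subseteq G$ with $\diam(S)\le t$, ordered by inclusion. I would use the height function
\[
h(S)=\bigl(\len_{min}(S),\,|S|\bigr)\in\N\times\N,
\]
ordered lexicographically. Adjacent vertices $S\subsetneq S'$ cannot share a height: if $\len_{min}(S)=\len_{min}(S')$ then $|S|<|S'|$. So $h$ is a legitimate Morse function. The lexicographic order on $\N\times\N$ is a well-order, and the unique minimum is $S=\{1\}$, at height $(0,1)$. The descending link $\dlk(S)$ is spanned by two kinds of vertices.
\begin{itemize}
\item \emph{Descending faces}: proper non-empty $S'\subsetneq S$ with $h(S')<h(S)$. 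Passing to a subset cannot decrease $\len_{min}$, so these are exactly the $S'$ with $\len_{min}(S')=\len_{min}(S)$, i.e.\ those meeting $M(S):=\{g\in S:\len(g)=\len_{min}(S)\}$.
\item \emph{Descending cofaces}: $S'\supsetneq S$ with $\diam(S')\le t$ and $\len_{min}(S')<\len_{min}(S)$. These form precisely the poset $D_t(S)$.
\end{itemize}
Every descending face is contained in every descending coface. Hence $\dlk(S)$ is the join of the face part $F(S)$ with $D_t(S)$.

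Next I compute $F(S)$ in two cases. If $\len$ is non-constant on $S$, then $M(S)$ is a non-empty proper subset of $S$, so $M(S)\in F(S)$. For any $S'\in F(S)$ we have $S'\supseteq S'\cap M(S)\subseteq M(S)$, and all three sets lie in $F(S)$. By Quillen's fiber/homotopy argument (two monotone maps to the poset, $S'\mapsto S'\cap M(S)$, linked to the identity and to the constant map $M(S)$), $F(S)$ is contractible. Therefore $\dlk(S)$ is contractible. If instead $\len$ is constant on $S$, then $F(S)$ is the poset of all proper non-empty subsets of $S$, i.e.\ the barycentric subdivision of $\partial\Delta^{|S|-1}\cong S^{|S|-2}$, which is $(|S|-3)$-connected. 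By hypothesis $D_t(S)$ is $(n-|S|-1)$-connected when $S\neq\{1\}$. The join-connectivity formula then gives that $\dlk(S)$ is $\bigl((|S|-3)+(n-|S|-1)+2\bigr)=(n-2)$-connected, or contractible in the contractible case. This also covers $|S|=1$, where $F(S)=\emptyset$ and $\dlk(S)=D_t(S)$ is $(n-2)$-connected.

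Finally I assemble. The sublevel complex at the minimal height is the single vertex $\{1\}$. Gluing in a vertex whose descending link is $(n-2)$-connected preserves $(n-1)$-connectivity, and gluing along a contractible link preserves homotopy type. The main obstacle is that each level $\len_{min}=k$ contains infinitely many vertices, so the Morse lemma must be applied in the form suited to a well-ordered height with infinitely many vertices per value. I would handle this as follows. Vertices of equal height are pairwise non-adjacent, so they can be attached simultaneously, each along its own descending link. The full complex is the increasing union of the sublevel sets $\Rips_t(G)'_{\le (k,m)}$. Any map of a sphere $S^j$ ($j\le n-1$) into $\Rips_t(G)'$ has compact image, which lies in some sublevel set. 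That sublevel set is $(n-1)$-connected by transfinite induction over the well-ordered heights, since at limit heights we again take unions and use compactness. So every such sphere is null-homotopic and $\Rips_t(G)$ is $(n-1)$-connected. In the contractible case the same argument shows all homotopy groups vanish, and Whitehead's theorem gives contractibility.
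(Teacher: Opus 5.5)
Your proposal is correct and follows the paper's proof essentially step for step. Your lexicographic height $(\len_{min}(S),|S|)$ induces the same order as the paper's $f(S)=\len_{min}(S)+|S|/N$, since $|S|<N$. The splitting $\dlk(S)=\dflk(S)*\dclk(S)$, the Quillen retraction $S'\mapsto S'\cap M(S)$ in the non-flat case, the sphere-join count $(|S|-3)+(n-|S|-1)+2=n-2$ in the flat case, and the compactness/Whitehead assembly all match the paper. The only cosmetic difference is that you avoid the bound $N$ by using a well-ordered height with compactness at limit stages, whereas the paper uses a real-valued Morse function and cites the Bestvina--Brady Morse lemma together with Proposition~\ref{prop:filter}.
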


This should be compared to the Morse theoretic approach in \cite{zaremsky22}, where roughly speaking (and ignoring many details) the poset one needs to understand instead consists of all $S'\supsetneq S$ with $\diam(S')=\diam(S)$. Our approach is in theory easier, since insisting not only on controlling the diameter but also on achieving $\len_{min}(S')<\len_{min}(S)$ makes for a smaller and hopefully easier to understand poset.

\medskip

As a proof-of-concept application, we consider the family of two-dimensional right-angled Artin groups with the standard word metric, and prove that they admit contractible Vietoris--Rips complexes, specifically for $t=2$.

\begin{lettertheorem}\label{thrm:main_raags}
If $\Gamma$ is triangle-free then $\Rips_2(A_\Gamma)$ (using the standard word metric) is contractible.
\end{lettertheorem}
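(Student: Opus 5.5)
We would deduce this from Theorem~\ref{thrm:main_general} with $t=2$. It suffices to show that $D_2(S)$ is contractible for every non-empty finite $S\ne\{1\}$ with $\diam(S)\le 2$ on which $\len$ is constant, say equal to $L\ge 1$. Two features of $A_\Gamma$ should drive the argument. First, geodesic normal forms are well understood, via the shuffling of commuting letters. Second, for $g\in A_\Gamma$, the set of ``terminal letters'' (generators or inverses $x$ with $\len(gx^{-1})<\len(g)$) pairwise commute. Since $\Gamma$ is triangle-free, $g$ therefore has at most two terminal letters, and these involve distinct, adjacent vertices of $\Gamma$.

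\emph{Step 1: reduce the poset.} Let $S'\in D_2(S)$ and write $S'_{<}=\{x\in S' : \len(x)<L\}$, which is non-empty by definition. The map $S'\mapsto S\cup S'_{<}$ is a monotone map of $D_2(S)$ into itself. It stays in $D_2(S)$, since its image is a subset of $S'$, so has diameter at most $2$, and it contains $S$ and a shorter element. It also satisfies $S\cup S'_{<}\subseteq S'$. By the standard poset-map lemma (a monotone map $f$ with $f(p)\le p$ is homotopic to the identity), $D_2(S)$ deformation retracts onto the subposet $E(S)$ of those $S'$ in which every element of $S'\setminus S$ has length $<L$. Since all elements of $S$ have length exactly $L$ and $\diam\le 2$, every element of $S'\setminus S$ has length $L-1$ or $L-2$.

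\emph{Step 2: find a cone point.} We look for a canonical element $c=c(S)$ with $\len(c)<L$ and $S\cup\{c\}\in E(S)$. We also want $c$ to satisfy $d(c,x)\le 2$ for every $x$ of length $<L$ that lies within distance $2$ of all of $S$. Given such a $c$, the chain $S'\le S'\cup\{c\}\ge S\cup\{c\}$ of monotone maps contracts $E(S)$.

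The natural candidate for $c$ uses the terminal letters. If every element of $S$ shares a terminal letter $x$, then a candidate is $c=sx^{-1}$ for some $s\in S$. More precisely, we would take the element obtained by deleting from some $s$ the common terminal letters of $S$. To control the distances we would analyse $S$ by cases on its shape, since $\diam(S)\le 2$ and $\len$ is constant:
\begin{itemize}
\item[(a)] $S=\{g\}$;
\item[(b)] $S=\{g,gy\}$ with $d=1$, which cannot occur at constant length in a RAAG by parity of word length;
\item[(c)] $S$ consists of elements $gu$ with $u$ of length $2$, whose prefixes share a common length-$(L-1)$ neighbour.
\end{itemize}
Parity is useful throughout: $\len(x)\equiv$ word length mod $2$, so every pair in $S$ is at distance exactly $0$ or $2$. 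Also, candidate lower elements at distance $\le 2$ from $s$ have length exactly $L-1$ with $d=1$, or $L-2$ with $d=2$.

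\emph{Main obstacle.} The hard step is Step~2 in the cases where no single cone point exists. A typical example is $S=\{g\}$ with two commuting terminal letters $a,b$. Here the lower neighbours $ga^{-1}$, $gb^{-1}$ and $ga^{-1}b^{-1}$ are all present, and further length-$(L-2)$ elements such as $ga^{-2}$ are within distance $2$ of $g$ but not of $gb^{-1}$. In such cases we would replace the single cone point by a covering of $E(S)$ by subposets. The covering would be indexed by the (at most two) terminal letters, or by the common ``branch point'' of the elements of $S$. Each piece would be a cone, and the pieces and their intersections would also be cones, since triangle-freeness keeps the clique of terminal letters of size $\le 2$. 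A nerve-lemma argument would then give contractibility. We expect that verifying the pairwise distance bounds inside each piece, using normal forms, will be the most delicate part. It is also the place where the triangle-free hypothesis is genuinely used: with three pairwise commuting terminal letters the analogous covering would have a non-contractible nerve.
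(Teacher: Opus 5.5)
You have a genuine gap. Your Step~1 is valid: $S'\mapsto S\cup S'_{<}$ is an idempotent monotone map lying below the identity. But Step~2, which carries all the content of the theorem, is never carried out, and the one concrete recipe you give fails for $|S|\ge 2$.

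\textbf{The cone point for $|S|\ge 2$.} If the elements of $S$ have a common descending neighbour $h$, then $s=h\ell_s$ with pairwise \emph{distinct} labels $\ell_s$. So $h$ is obtained by deleting the differing last letters, not the shared ones. For example, take $\Gamma$ a single edge $\{u,v\}$, so $A_\Gamma=\Z^2$, and $S=\{vu,v^{-1}u\}$. These share the terminal letter $u$, yet $vu\cdot u^{-1}=v$ is at distance $3$ from $v^{-1}u$; the correct cone point is $u$.

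\textbf{Existence of $h$.} More seriously, your case (c) takes for granted that a flat $S$ with pairwise distances $2$ has a common descending neighbour. For $|S|=2$ this holds in any RAAG (Lemma~\ref{lem:exactly_two}). For $|S|\ge 3$ it is exactly where triangle-freeness is used: in $\Z^3$ the flat set $\{ab,bc,ca\}$ has no common descending neighbour. The paper needs the bipartite-graph argument of Lemma~\ref{lem:at_least_four}, and the hexagon Lemma~\ref{lem:hex} (via Lemma~\ref{lem:exactly_three}), to prove it.

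\textbf{Distance bounds.} Even granting $h$, you must show $h$ is adjacent to every $x$ with $\len(x)=L-2$ and $d(x,s)=2$ for all $s\in S$. The triangle inequality only gives $d(x,h)\le 3$, so this again needs the three-element lemma. Suppose $x$ were not adjacent to $h$. The middle vertices $y_s$ of geodesics from $s$ to $x$ are descending neighbours of $s$ different from $h$. They are not all equal, by uniqueness of $h$, so some $\{h,y_s,y_{s'}\}$ is a flat triple at pairwise distance $2$. Lemma~\ref{lem:exactly_three} gives it a common descending neighbour, which by the uniqueness in Lemma~\ref{lem:exactly_two} must be $x$, a contradiction.

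\textbf{The $|S|=1$ case is the easy one.} Given your own Step~1, the case you single out as the main obstacle needs no covering. Suppose $g$ has descending neighbours $h_1=ga^{-1}$ and $h_2=gb^{-1}$, and let $x\in S'\setminus\{g\}$ with $S'\in E(\{g\})$. Then either $x\in\{h_1,h_2\}$, or $\len(x)=L-2$ and $d(x,g)=2$. In the latter case the middle vertex of a geodesic from $g$ to $x$ has length $L-1$ by parity, so it is $h_1$ or $h_2$. Hence $m=ga^{-1}b^{-1}=\med(h_1,h_2,1)$ lies within distance $2$ of every such $x$ and of $g$, and $S'\mapsto S'\cup\{m\}$ cones off $E(\{g\})$. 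This $m$ is precisely your own candidate ``delete the common terminal letters''; your example $ga^{-2}$ only rules out $gb^{-1}$.

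\textbf{Comparison with the paper.} The paper works with the unreduced $D_2(\{g\})$, which contains elements of length $L$ and $L+1$. It covers this by two cones at $\{g,h_1\}$ and $\{g,h_2\}$, and needs Lemma~\ref{lem:straight_hex} plus further hexagon arguments to show they cover. Your reduction would also remove the treatment of high elements of $S'\setminus S$ in Proposition~\ref{prop:unique_d_nbr}.

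\textbf{Where triangle-freeness enters.} It is not via a non-contractible nerve. Take $g=abc\in\Z^3$ with $a,b,c$ the standard generators, and $h_i$ the three descending neighbours. The three cones $\{S'\mid S'\cup\{h_i\}\in D_2(\{g\})\}$ have contractible nerve, but they fail to cover, since $\{abc,a,b,c\}$ lies in none of them.

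So the missing piece is the $|S|\ge 2$ analysis above, which is the real core of the proof.
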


As a remark, all right-angled Artin groups admit finite generating sets with respect to which they have contractible Vietoris--Rips complexes \cite[Theorem~4.1 and Lemma~5.20]{chalopin25}, so the focus here is on the standard generating set, where the contractibility question is open in general. We also remark that, using a different approach, Li and S\'anchez Salda\~na have independently proved the stronger result that if $\Gamma$ is triangle-free then $\Rips_t(A_\Gamma)$ (using the standard word metric) is contractible for all $t\ge 2$ \cite{li}. A natural prediction in general is that for $n$ the maximum size of a clique in $\Gamma$, equivalently the cohomological dimension of $A_\Gamma$, $\Rips_t(A_\Gamma)$ should be contractible for all $t\ge n$.

\medskip

We also use Theorem~\ref{thrm:main_general} to prove an interesting sufficient condition for a group to be finitely presented, which only requires one to prove that certain complexes are connected or non-empty, with no need to prove anything is simply connected.

\begin{lettertheorem}\label{thrm:fp}
Let $G$ be a finitely generated group with $d$ and $\len$ as above.
\begin{enumerate}
    \item Suppose there exists $t$ such that for all $1\ne g\in G$ the complex $D_t(\{g\})$ is connected, and for all $g,g'\in G$ with $d(g,g')=t$ and $\len(g)=\len(g')$ the complex $D_t(\{g,g'\})$ is non-empty. Then $G$ is finitely presented.
    \item If the Cayley graph of $G$ has no odd-length cycles, then $G$ is finitely presented as soon as there exists some odd $t\in\N$ such that $D_t(\{g\})$ is connected for all $g\ne 1$.
\end{enumerate}
\end{lettertheorem}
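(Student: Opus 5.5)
Both parts will be deduced from Theorem~\ref{thrm:main_general} with $n=2$, together with the standard fact that $G$ is finitely presented as soon as some $\Rips_t(G)$ is simply connected. Indeed, $G$ acts freely and cocompactly on $\Rips_t(G)$: $G$ acts freely on vertices, and there are finitely many orbits of simplices because a simplex has diameter at most $t$ and balls are finite. A group acting freely and cocompactly on a simply connected simplicial complex is finitely presented, since it is the fundamental group of the finite quotient complex. So it suffices to check the hypothesis of Theorem~\ref{thrm:main_general} for $n=2$. That hypothesis asks that $D_t(S)$ be $(1-|S|)$-connected for all admissible $S$, meaning non-empty finite $S\neq\{1\}$ with $\diam(S)\le t$ and $\len$ constant on $S$.

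For part (1), the hypothesis only has content for $|S|\le 2$. When $|S|\ge 3$ we need $(\le -2)$-connectivity, which is vacuous. When $|S|=1$, $S=\{g\}$ with $g\ne1$, we need $D_t(\{g\})$ to be $0$-connected, which is assumed. When $|S|=2$, $S=\{g,g'\}$ with $\len(g)=\len(g')$, $g\neq g'$ and $d(g,g')\le t$, we need $D_t(S)$ non-empty. For $d(g,g')=t$ this is assumed. For $d(g,g')<t$, I will produce an element directly: take a neighbour $h$ of $g$ with $\len(h)=\len(g)-1$, which exists because $g\ne 1$ (since $\len g=\len g'$ and $g\ne g'$, we cannot have $g=1$). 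Then $d(h,g')\le d(g,g')+1\le t$, so $S\cup\{h\}$ has diameter at most $t$ and smaller $\len_{min}$, hence lies in $D_t(S)$. So $\Rips_t(G)$ is simply connected and $G$ is finitely presented.

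For part (2), I will reduce to part (1) by showing that the distance-$t$ pairs never arise. If the Cayley graph is bipartite, $\len(g)\equiv\len(g') \pmod 2$ implies $d(g,g')$ is even, because the parity of $d(g,g')$ equals the parity of $\len(g)+\len(g')$. So with $t$ odd there are no $g,g'$ with $d(g,g')=t$ and $\len(g)=\len(g')$, and the second hypothesis of part (1) holds vacuously.

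The only points I expect to need care are these. First, I must confirm that the empty-or-not convention matches: "$(-1)$-connected" should mean non-empty, as is standard. Second, I must state the finite-presentation fact precisely, including the finiteness of orbits of simplices. The reduction in the $d(g,g')<t$ case is the one genuine step, and it is easy.
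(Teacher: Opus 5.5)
Your Morse-theoretic argument is the paper's own: apply Theorem~\ref{thrm:main_general} (equivalently Corollary~\ref{cor:general_morse}) with $n=2$, note that only $|S|\le 2$ carries content, handle flat pairs with $d(g,g')<t$ by adjoining a descending neighbor $h$ of $g$, and get part (2) from parity in the bipartite Cayley graph. (As in the paper, the inequality $d(h,g')\le d(g,g')+1\le t$ tacitly uses $t\in\N$.)

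The one step that is wrong as written is the passage from simple connectivity to finite presentability: $G$ does not in general act freely on $\Rips_t(G)$. Acting freely on the points of $G$ does not make the action on the complex free when $G$ has torsion. For instance, an involution $a$ with $\len(a)\le t$ fixes the vertex $\{1,a\}$, i.e.\ the midpoint of the edge from $1$ to $a$. So $\Rips_t(G)\to\Rips_t(G)/G$ is not a covering, and $G$ need not be the fundamental group of the quotient. For $G$ finite and $t\ge\diam(G)$, the complex is a cone whose apex $G$ is fixed by $G$, so the quotient is contractible.

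The repair is easy. The action is proper, since the stabilizer of a finite set $S$ lies in $SS^{-1}$, and it is cocompact. A group acting properly and cocompactly on a simply connected simplicial complex is finitely presented, and this is exactly what the paper invokes. Alternatively, the $1$-skeleton of the unsubdivided complex is the Cayley graph of $G$ with respect to the ball of radius $t$. Simple connectivity then says that the finitely many relations $xy=z$ among elements of that ball present $G$.
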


\subsection*{Acknowledgments} Aspects of this work will also appear in part of the first author's PhD thesis. We thank Kevin Li and Luis Jorge S\'anchez Salda\~na for helpful discussions and sharing their paper \cite{li} in advance.

\section{Vietoris--Rips complexes}\label{sec:VR}

Let $X$ be a metric space. In the traditional definition of the Vietoris--Rips complex $\Rips_t(X)$, the vertex set is $X$ and simplices are determined by elements of $X$ being within $t$ of each other. Here we will always use the barycentric subdivision of this, so that we can phrase things in terms of posets; up to homeomorphism there is no difference.

\begin{definition}[Vietoris--Rips complex]
Let $X$ be a metric space. Let $P=\mathcal{P}_{\mathrm{fin}}(X)$ be the poset of all non-empty finite subsets of $X$, and for each $t\in\R\cup\{\infty\}$ let $P_t$ be the subposet of those $S\in P$ with $\diam(S)\le t$. The \emph{Vietoris--Rips complex} $\Rips_t(X)$ with parameter $t$ is the geometric realization
\[
\Rips_t(X) \coloneqq |P_t|\text{.}
\]
\end{definition}

Here the \emph{geometric realization} of a poset $(P,\le)$ is the simplicial complex $|P|$ with vertex set $P$ and a $k$-simplex for each chain $p_0<\cdots<p_k$, with face relation given by taking subchains.

Note that $\Rips_\infty(X)$ is always contractible since the poset $P$ is directed, indeed any union of finite subsets is finite. Also, if $X$ is bounded then $\Rips_t(X)$ is contractible for $t$ at least the diameter of $X$, since it is a simplex. The question of which unbounded $X$ admit contractible $\Rips_t(X)$ for some $t<\infty$ is surprisingly difficult in general. For example, for the seemingly easy example of the integer lattice $\Z^n$ in $\R^n$ with the usual $L^1$ metric, $\Rips_t(\Z^n)$ is contractible for large enough $t$, but this was open for a long time until proved by Virk relatively recently in \cite{virk25}; see also \cite{zaremsky26,gupta} for improvements on ``for large enough $t$''.

The $L^1$ metric on $\Z^n$ is an example of a word metric on a group, and more generally investigating contractibility of Vietoris--Rips complexes of groups is a well established endeavor. For example, Rips's proof that hyperbolic groups have contractible Vietoris--Rips complexes was prominent enough to warrant the terminology ``Vietoris--Rips complex'' (Vietoris first studied these complexes \cite{vietoris27}, and Rips's proof is explained for example in \cite[Proposition~III.$\Gamma$.3.23]{bridson99}). See \cite{varisco21,zaremsky22} for more on this, including an approach using Bestvina--Brady discrete Morse theory that takes $\diam$ as an initial measurement.

One particularly useful feature of Vietoris--Rips complexes of groups is that a finitely generated group $G$ always admits a nice (i.e., proper cocompact) action on each $\Rips_t(G)$ by left translation. Indeed, since the word metric $d$ for $G$ is defined by $d(g,h)$ equaling the word length of $g^{-1}h$, this is invariant under left translation by $G$, and finiteness of the generating set makes this action proper and cocompact. In particular, thanks to Brown's finiteness criterion from \cite{brown87}, Vietoris--Rips complexes always reveal finiteness properties of $G$, in the sense that $G$ is of type $\F_n$ if and only if $(\Rips_t(G))_t$ is essentially $(n-1)$-connected, and type $\FP_n$ if and only if $(\Rips_t(G))_t$ is essentially $(n-1)$-acyclic. We will not dwell on this application since our main groups of interest all have the strongest possible finiteness properties anyway, but see \cite[Lemma~6.3]{zaremsky22} for more details on this.

A somewhat obvious, but useful, sufficient condition for $\Rips_t(X)$ to be highly connected is the following.

\begin{proposition}\label{prop:filter}
Let $X$ be a metric space, and let $\mathcal{Y}$ be a family of subsets of $X$ such that every finite subset of $X$ lies in some $Y\in\mathcal{Y}$. View each $Y$ as a metric space with the induced metric. If $\Rips_t(Y)$ is $(n-1)$-connected for all $Y\in\mathcal{Y}$ then $\Rips_t(X)$ is $(n-1)$-connected.
\end{proposition}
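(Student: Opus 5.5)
The plan is a compactness argument: every homotopy sphere in $\Rips_t(X)$ lives on finitely many points, so it lives in some $\Rips_t(Y)$, where it can be filled.

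Fix $0\le k\le n-1$ and a map $f\colon S^k\to \Rips_t(X)=|P_t|$. By simplicial approximation (or simply because $S^k$ is compact), the image of $f$ lies in a finite subcomplex $K$ of $|P_t|$. The set $K$ has finitely many vertices, each a finite subset $S_1,\dots,S_m\subseteq X$. Let $F=S_1\cup\cdots\cup S_m$; this is a finite subset of $X$. By hypothesis there is some $Y\in\mathcal{Y}$ with $F\subseteq Y$.

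Next, observe that $\Rips_t(Y)$ is naturally a full subcomplex of $\Rips_t(X)$. It is the geometric realization of the subposet of $P_t$ consisting of finite subsets of $Y$ of diameter at most $t$. Diameters computed with the induced metric agree with those in $X$, and a chain in $P_t$ all of whose members lie in $Y$ is exactly a chain in the poset for $Y$. Every vertex $S_i$ of $K$ is a subset of $F\subseteq Y$ with $\diam(S_i)\le t$, so every simplex of $K$ (a chain of such $S_i$) lies in $\Rips_t(Y)$. Hence $f$ factors through $\Rips_t(Y)$.

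Finally, since $\Rips_t(Y)$ is $(n-1)$-connected and $k\le n-1$, $f$ extends to the disk $B^{k+1}$ inside $\Rips_t(Y)\subseteq\Rips_t(X)$. Thus $f$ is nullhomotopic in $\Rips_t(X)$. For $k=0$ this gives path-connectedness; nonemptiness also follows, since $X$ nonempty gives a point lying in some $Y$, whose complex is nonempty. The case $n=0$ amounts to exactly this nonemptiness.

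The only step requiring any care is the compactness claim, namely that a compact image in a CW/simplicial complex meets only finitely many open cells. This is standard, so I expect no real obstacle; the rest is bookkeeping about induced metrics.
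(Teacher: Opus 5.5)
Your proposal is correct and follows the paper's own argument: compactness puts the sphere in a finite subcomplex, its finitely many vertices live in some $Y\in\mathcal{Y}$, and the map therefore factors through the $(n-1)$-connected $\Rips_t(Y)$. You are also slightly more careful than the paper in two places: you take the union of the vertices (which are finite subsets of $X$, because of the barycentric subdivision), and you treat nonemptiness separately.
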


\begin{proof}
We want to prove that every map from a sphere of dimension at most $n-1$ to $\Rips_t(X)$ is nullhomotopic. Since spheres are compact, any such map factors through the inclusion of some finite subcomplex $F$ into $\Rips_t(X)$. The vertices of $F$ form a finite subset of $X$, so our hypotheses ensure that this inclusion factors through $\Rips_t(Y)\to \Rips_t(X)$ for some $Y\in\mathcal{Y}$, hence the map is nullhomotopic.
\end{proof}

Finally, for ease of reference let us record the following standard fact about geometric realizations of posets that we will use frequently in what follows.

\begin{cit}\cite[Subsection~1.5]{quillen78}\label{cit:quillen_poset}
Let $(P,\le)$ be a poset and $\phi\colon P\to P$ a poset map. If there exists $p_0\in P$ such that $p\le \phi(p)\ge p_0$ for all $p\in P$, then $|P|$ is contractible. If there exists $p_0\in P$ such that $p\ge \phi(p)\le p_0$ for all $p\in P$, then $|P|$ is contractible.
\end{cit}

\section{Discrete Morse theory}\label{sec:morse}

Let us set up the type of discrete Morse theory we will need, which is a special case of Bestvina--Brady discrete Morse theory as in \cite{bestvina97}. Let $Y$ be a simplicial complex and $f\colon Y^{(0)}\to\R$ a function on the vertex set. If the image of $f$ is closed and discrete, and for each edge of $Y$ the $f$-values of its endpoints are distinct, we call $f$ a \emph{Morse function}. For $t\in\R$ let $Y^{f\le t}$ be the \emph{sublevel complex} of $Y$ consisting of all simplices whose vertices all have $f$-value at most $t$. The \emph{descending link} $\dlk(v)$ of a vertex $v$ is the subcomplex of $Y$ consisting of all simplices $\sigma$ such that $\sigma\cup\{v\}$ is a simplex, $v\not\in\sigma$, and $f(w)<f(v)$ for all vertices $w$ of $\sigma$.

The following Morse lemma essentially follows from \cite[Corollary~2.6]{bestvina97}. For a more direct reference, it is immediate from \cite[Corollary~1.11]{zaremsky22} together with the Whitehead theorem.

\begin{lemma}[Morse lemma]\label{lem:morse}
Let $-\infty\le t<s\le\infty$. If $\dlk(v)$ is $(n-1)$-connected for all vertices $v$ of $Y$ with $t<f(v)\le s$, then the inclusion $Y^{f\le t}\to Y^{f\le s}$ induces an isomorphism in $\pi_k$ for all $k\le n-1$ and a surjection in $\pi_n$. If $\dlk(v)$ is contractible for all $v$ with $t<f(v)\le s$, then $Y^{f\le t}\to Y^{f\le s}$ is a homotopy equivalence.
\end{lemma}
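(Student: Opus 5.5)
The plan is to reduce everything to a single ``level'' of $f$, show that crossing one level is the same as coning off descending links, and then handle unbounded intervals by compactness.

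\textbf{Step 1: one level at a time.} Since the image of $f$ is closed and discrete, every bounded interval contains only finitely many values of $f$. So for finite $t<s$, the inclusion $Y^{f\le t}\to Y^{f\le s}$ factors as a finite composition of inclusions $Y^{f<r}\to Y^{f\le r}$, one for each value $r$ of $f$ in $(t,s]$. It therefore suffices to show that each such inclusion is $n$-connected, meaning an isomorphism on $\pi_k$ for $k\le n-1$ and a surjection on $\pi_n$, and that it is a homotopy equivalence in the contractible case. Compositions of such maps keep these properties.

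\textbf{Step 2: the key structural observation.} Since $f$ is a Morse function, no edge joins two vertices of the same $f$-value. Hence a simplex of $Y^{f\le r}$ not lying in $Y^{f<r}$ contains exactly one vertex $v$ with $f(v)=r$. Its remaining vertices span a simplex of $\dlk(v)$, so the simplex lies in the closed star of $v$ within $Y^{f\le r}$, which is the cone $v*\dlk(v)$. Moreover, for distinct such $v,v'$ these cones meet $Y^{f<r}$ exactly in $\dlk(v)$ and $\dlk(v')$, and meet each other only inside $Y^{f<r}$. Thus
\[
Y^{f\le r}=Y^{f<r}\cup\bigcup_{f(v)=r}\bigl(v*\dlk(v)\bigr),
\]
a pushout obtained by attaching, along subcomplexes, cones on the descending links.

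\textbf{Step 3: coning off highly connected spaces.} Attaching a cone along an $(n-1)$-connected subcomplex $L$ gives a relative CW pair $(C L, L)$ that is $n$-connected. By the usual cellular or excision argument (e.g.\ homotopy excision, or directly: any map of a $k$-sphere with $k\le n$ can be pushed off the cone points and then homotoped into $Y^{f<r}$ using the connectivity of $L$), the inclusion $Y^{f<r}\to Y^{f\le r}$ is an isomorphism on $\pi_k$ for $k\le n-1$ and a surjection on $\pi_n$. Attaching arbitrarily many cones simultaneously is handled the same way, since any sphere or homotopy meets only finitely many of them.

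In the contractible case, each $\dlk(v)\to v*\dlk(v)$ is an inclusion of a subcomplex that is a homotopy equivalence, hence so is its pushout $Y^{f<r}\to Y^{f\le r}$. Alternatively, all homotopy groups are isomorphisms and one applies Whitehead.

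\textbf{Step 4: infinite endpoints.} For $s=\infty$, note that $Y^{f\le\infty}$ is the increasing union of the $Y^{f\le r}$. Any sphere or nullhomotopy is compact, so it lands in a finite subcomplex and hence in some $Y^{f\le r}$ with $r$ finite. This reduces the statement to the finite case, with Whitehead giving the homotopy equivalence when all links are contractible.

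The case $t=-\infty$ is where I expect the main obstacle. Here one needs to push compact images down past infinitely many levels, which does not terminate if $f$ is unbounded below. I would first try the same compactness argument applied to pairs, using that a compact subcomplex only sees finitely many values. If that gets messy, I would fall back on exactly what the paper indicates: \cite[Corollary~1.11]{zaremsky22} already gives the connectivity statement for arbitrary $-\infty\le t<s\le\infty$, and the homotopy equivalence then follows from the Whitehead theorem.
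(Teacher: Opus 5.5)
Your argument is correct whenever $t$ is finite (with $s$ finite or, via your compactness reduction, $s=\infty$). It is essentially the standard mechanism behind what the paper does. The paper gives no argument of its own: it simply cites \cite[Corollary~2.6]{bestvina97} and \cite[Corollary~1.11]{zaremsky22} together with the Whitehead theorem. Your Steps~1--3 make that mechanism explicit. Only finitely many $f$-values lie in a bounded window, and no edge lies inside a level. So each level crossing is a pushout attaching the cones $v*\dlk(v)$ along $\dlk(v)$, which is $n$-connected, resp.\ a homotopy equivalence when the links are contractible. What this buys over the citation is transparency about exactly where closedness and discreteness of the image and the edge condition are used. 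It also already covers every use of the lemma in the paper, since the proof of Corollary~\ref{cor:general_morse} only takes $t=\frac{1}{N}$ with a nonnegative $f$.

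The part to fix is your fallback for $t=-\infty$.

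If $f$ is bounded below, take any real $t'<\min f(Y^{(0)})$. Then $Y^{f\le t'}=\emptyset=Y^{f\le -\infty}$, and the hypotheses over $(t',s]$ and $(-\infty,s]$ coincide, so your finite argument already handles this case.

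If $f$ is unbounded below, your diagnosis that the push-down never terminates is exactly right. However, neither a cleverer compactness argument nor a citation can rescue it, because the statement as written is false there. Let $Y$ be two disjoint copies of the real line, triangulated with vertices at the integers, and let $f$ be the height. This is a Morse function, and every descending link is a single point, hence contractible. But $Y^{f\le 0}$ is two disjoint rays. So $\emptyset=Y^{f\le-\infty}\to Y^{f\le 0}$ is not a homotopy equivalence, and not even a bijection on $\pi_0$. Since $Y^{f\le 0}$ is not even connected, reading the $t=-\infty$ case as ``$Y^{f\le s}$ is $(n-1)$-connected'' fails too.

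So instead of invoking \cite{zaremsky22} for that case, restrict the lemma to $t>-\infty$, or to $f$ bounded below when $t=-\infty$. Nothing in the paper depends on the excluded case.
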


Intuitively, the Morse function provides an ``order'' in which to glue new vertices in, to build up from $Y^{f\le t}$ to $Y^{f\le s}$, in a way that amounts to always coning off descending links up to homotopy. Thus high connectivity of descending links ensures that lower homotopy groups do not change, and contractible descending links ensure the homotopy type does not change.

\medskip

Now let us discuss our specific Morse function of interest. Let $t\in\R$. Let $X$ be a graph whose edges all have length $1$, viewed as a metric space with the path metric. Assume there is a uniform bound on the cardinality of balls of radius $t$. The quintessential example for us is the Cayley graph of a finitely generated group. We will abuse notation and also write $X$ for the set of vertices of $X$ with the induced metric. A vertex of $\Rips_t(X)$ is a non-empty finite subset $S$ of $X$ whose elements are pairwise within distance $t$ of each other (recall that when we write $\Rips_t(X)$ we mean the barycentric subdivision of what is usually called $\Rips_t(X)$). Fix a uniform strict upper bound $N$ on the cardinality of such $S$ (for example one more than the bound on the cardinality of balls of radius $t$ works). Now fix $x_0\in X$ (for example in a Cayley graph think $x_0=1$), write $\len(g)=d(g,x_0)$ for $g\in X$ and $\len_{min}(S)$ for the minimum $\len$ value on a subset $S$, and define
\[
f(S) \coloneqq \len_{min}(S) + \frac{|S|}{N}\text{.}
\]

\begin{lemma}
The function $f$ is a Morse function.
\end{lemma}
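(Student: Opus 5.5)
We must check the two defining conditions of a Morse function for $f(S)=\len_{min}(S)+|S|/N$ on the vertex set of $\Rips_t(X)$: the image of $f$ is closed and discrete, and the endpoints of every edge have distinct $f$-values.

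For the image, note that $\len_{min}(S)$ is a non-negative integer, since edges have length $1$ and we use the path metric on vertices. Also $|S|$ is an integer with $1\le |S|<N$. Hence every value of $f$ lies in the set
\[
\Bigl\{\, m+\tfrac{k}{N} : m\in\Z_{\ge 0},\ 1\le k\le N-1 \,\Bigr\}\text{.}
\]
This set is contained in $\frac{1}{N}\Z$, which is closed and discrete in $\R$. Any subset of a closed discrete subset of $\R$ is again closed and discrete, since it has no accumulation points. So the image of $f$ is closed and discrete.

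For the edge condition, recall that we are working with the barycentric subdivision. An edge of $\Rips_t(X)$ therefore joins two vertices $S\subsetneq S'$, both non-empty finite sets of diameter at most $t$. Since $S\subseteq S'$, we have $\len_{min}(S')\le \len_{min}(S)$, and since the inclusion is strict, $|S'|>|S|$. There are two cases.
\begin{itemize}
\item If $\len_{min}(S')=\len_{min}(S)$, then $f(S')-f(S)=(|S'|-|S|)/N>0$.
\item If $\len_{min}(S')<\len_{min}(S)$, then $\len_{min}(S)-\len_{min}(S')\ge 1$ by integrality. Since $1\le |S|,|S'|\le N-1$, we also have $\bigl||S'|-|S|\bigr|/N<1$. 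Hence $f(S')<f(S)$.
\end{itemize}
In either case the values differ, which completes the verification.

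The only point needing care is this second case: the fractional term $|S|/N$ must never outweigh a drop of at least $1$ in $\len_{min}$. This is exactly where the choice of $N$ as a strict upper bound on the cardinality of vertices of $\Rips_t(X)$ is used.
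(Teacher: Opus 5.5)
Your proof is correct and matches the paper's argument. Closed discreteness comes from integrality of distances together with $|S|<N$, and the edge condition is handled by the same two cases on whether $\len_{min}$ strictly drops along the inclusion $S\subsetneq S'$. You also spell out why $(|S'|-|S|)/N<1$ cannot offset a drop of at least $1$ in $\len_{min}$, a step the paper leaves implicit.
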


\begin{proof}
Since distances in $X$ are integer valued, and $|S|<N$, we see that the image of $f$ is closed and discrete in $\R$. An edge in $\Rips_t(G)$ has endpoints $S_1$ and $S_2$ for some $S_1 \subsetneq S_2$, and we must show that $f(S_1)\ne f(S_2)$. Indeed, either some element of $S_2$ is strictly closer to the identity than all elements of $S_1$, hence $f(S_2)<f(S_1)$, or else $S_1$ and $S_2$ have the same minimum distance to the identity and hence $f(S_2)>f(S_1)$.
\end{proof}

In fact this proof also reveals the descending links. The descending link of $S$ is the subcomplex of $\Rips_t(X)$ spanned by all vertices $S'$ such that, as subsets of $X$, either $S'$ contains $S$ and has strictly smaller minimum distance to $x_0$, or else $S'$ is strictly contained in $S$ and has the same minimum distance to $x_0$. The vertices of the first form span the \emph{descending coface link} $\dclk(S)$, denoted $D_t(S)$ in the introduction, and those of the second form span the \emph{descending face link} $\dflk(S)$ (the terminology comes from recalling how we are working with the barycentric subdivision of what is usually called the Vietoris--Rips complex). Note that $\dlk(S) = \dflk(S) * \dclk(S)$, so if $\dflk(S)$ is $(n-1)$-connected and $\dclk(S)$ is $(m-1)$-connected then $\dlk(S)$ is $(n+m)$-connected, and if either factor is contractible then the whole descending link is contractible.

\begin{lemma}\label{lem:not_flat}
If $\len$ is not constant on $S$, then $\dflk(S)$, hence $\dlk(S)$, is contractible.
\end{lemma}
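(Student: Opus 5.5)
The plan is to apply Citation~\ref{cit:quillen_poset} directly to the poset underlying $\dflk(S)$. That poset consists of all non-empty $S'\subsetneq S$ with $\len_{min}(S')=\len_{min}(S)$. Let $m=\len_{min}(S)$ and let $S_0=\{x\in S : \len(x)=m\}$ be the set of elements of $S$ realizing the minimum.

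Since $\len$ is not constant on $S$, the set $S_0$ is a non-empty proper subset of $S$. Moreover $\len_{min}(S_0)=m$, so $S_0$ is itself a vertex of $\dflk(S)$; it also has diameter at most $t$, being a subset of $S$.

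Next, define $\phi(S')=S'\cup S_0$ for $S'$ in this poset, and check the following.
\begin{itemize}
\item $S'\cup S_0\subseteq S$.
\item The union is proper: every $S'$ in the poset misses some element of $S$, and it cannot miss only elements outside $S_0$ while $S_0$ covers them. More carefully, if $S'\cup S_0=S$, then $S'$ contains all elements of $S\setminus S_0$, which is non-empty. That alone is not a contradiction, so it is cleaner to use a map that shrinks instead of one that enlarges.
\end{itemize}

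So instead take $\phi(S')=S'\cap S_0$. This is non-empty because $S'$ has minimum $\len$ value $m$, so it contains some element of $S_0$. It is contained in $S_0\subsetneq S$ and has $\len_{min}=m$, so $\phi(S')$ lies in the poset. Clearly $\phi$ is order-preserving.

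We then have $S'\supseteq\phi(S')\subseteq S_0$, with $S_0=p_0$ in the poset. By the second clause of Citation~\ref{cit:quillen_poset}, $|\dflk(S)|$ is contractible.

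Finally, $\dlk(S)=\dflk(S)*\dclk(S)$, and a join with a contractible factor is contractible, as noted above. Hence $\dlk(S)$ is contractible as well.

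The only point needing care is to choose the shrinking map rather than the enlarging one, so that properness of the subsets is automatic. The rest is routine.
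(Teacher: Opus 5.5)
Your final argument is correct and is exactly the paper's proof. You take $S_0$ to be the $\len$-minimizers in $S$, note that every vertex $S'$ of $\dflk(S)$ meets $S_0$, apply the second clause of Citation~\ref{cit:quillen_poset} to $\phi(S')=S'\cap S_0$, and then pass to the join. You were also right to drop the enlarging map $S'\mapsto S'\cup S_0$, which genuinely fails (e.g.\ $S'=S\setminus\{s\}$ with $s\in S_0$ and $|S_0|\ge 2$ gives $S'\cup S_0=S$), and that detour should simply be omitted from a write-up.
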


\begin{proof}
Let $S_0\subsetneq S$ be the subset of those elements minimizing $\len$, so $S_0\in \dflk(S)$. For any $S'\subsetneq S$ in $\dflk(S)$ we have that $S'\cap S_0\ne\emptyset$, so the poset map $\phi \colon S'\mapsto S'\cap S_0$ on the poset whose geometric realization is $\dflk(S)$ satisfies $S'\supseteq \phi(S')\subseteq S_0$. By Citation~\ref{cit:quillen_poset} we conclude that $\dflk(S)$, and hence $\dlk(S)$, is contractible.
\end{proof}

Now if one wishes to use $f$ to prove that $\Rips_t(X)$ is highly connected, the remaining work involves proving high connectivity of descending links of ``flat'' $S$, meaning those $S$ across which $\len$ is constant (this includes the $|S|=1$ case). Since in this case $\dflk(S)$ consists of all proper non-empty subsets of $S$, and hence is a $(|S|-2)$-sphere, this is really about understanding high connectivity of $\dclk(S)$. The following has Theorem~\ref{thrm:main_general} as a special case.

\begin{corollary}\label{cor:general_morse}
If $\dclk(S)$ is $(n-|S|-1)$-connected for all non-empty finite $\{x_0\}\ne S\subseteq X$ such that $\diam(S)\le t$ and $\len$ is constant on $S$, then $\Rips_t(X)$ is $(n-1)$-connected. If $\dclk(S)$ is contractible for all such $S$, then $\Rips_t(X)$ is contractible.
\end{corollary}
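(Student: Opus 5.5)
We apply the Morse lemma (Lemma~\ref{lem:morse}) to $Y=\Rips_t(X)$ with the Morse function $f(S)=\len_{min}(S)+|S|/N$. We build $\Rips_t(X)$ up from a base sublevel complex whose topology is easy to determine.

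\emph{The base.} The minimum value of $f$ is $1/N$, attained only at the vertex $\{x_0\}$. Moreover, any $S$ with $f(S)<1$ satisfies $\len_{min}(S)=0$, hence $x_0\in S$. So we take the base to be the sublevel complex $Y^{f\le 1/N}$, which is the single point $\{x_0\}$ and is therefore contractible.

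\emph{Descending links above the base.} Every other vertex $S$ has $f(S)>1/N$, and we check that its descending link is sufficiently connected.
\begin{itemize}
\item If $\len$ is not constant on $S$, then $\dlk(S)$ is contractible by Lemma~\ref{lem:not_flat}.
\item If $\len$ is constant on $S$ and $S\ne\{x_0\}$, then $\dflk(S)$ is the boundary of a simplex on $|S|$ vertices, i.e.\ a $(|S|-2)$-sphere, which is $(|S|-3)$-connected. By hypothesis $\dclk(S)$ is $(n-|S|-1)$-connected. Using the join formula $\dlk(S)=\dflk(S)*\dclk(S)$, with the convention that a $k$-connected space has $k+2$ "added" to connectivity in a join, the join of an $(a-1)$-connected space and a $(b-1)$-connected space is $(a+b)$-connected. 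Here $a=|S|-2$ and $b=n-|S|$, so $\dlk(S)$ is $(n-2+1-1)=(n-1)$-connected.
\end{itemize}
The main point to be careful about is this index arithmetic in the edge cases. When $|S|=1$, $\dflk(S)$ is empty, a $(-1)$-sphere, which is $(-2)$-connected; the join is then just $\dclk(S)$, which is $(n-2)$-connected by hypothesis. That is one short of what the uniform formula appears to give, so I would recheck it. The stated formula says $(n-1)$-connected $\ast$ $(m-1)$-connected is $(n+m)$-connected. With $\dflk$ an $(|S|-2)$-sphere, $\dflk$ is $((|S|-2)-1)$-connected, so its "$n$" is $|S|-2$. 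With $\dclk$ $((n-|S|)-1)$-connected, its "$m$" is $n-|S|$. The join is therefore $(n-2)$-connected. So the descending links are uniformly $(n-2)$-connected, which is the right level.

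\emph{Applying the Morse lemma.} Lemma~\ref{lem:morse} with $n-1$ in place of $n$ shows that the inclusion $Y^{f\le 1/N}\to Y^{f\le\infty}=\Rips_t(X)$ induces isomorphisms on $\pi_k$ for $k\le n-2$ and a surjection on $\pi_{n-1}$. Since the base is a point, $\pi_k(\Rips_t(X))=0$ for all $k\le n-1$, so $\Rips_t(X)$ is $(n-1)$-connected.

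\emph{Contractible case.} If every $\dclk(S)$ is contractible, then every descending link is a join with a contractible factor and hence contractible. The second part of Lemma~\ref{lem:morse} then gives that the inclusion of the point $\{x_0\}$ into $\Rips_t(X)$ is a homotopy equivalence, so $\Rips_t(X)$ is contractible.

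The only real obstacle is the index bookkeeping in the join step, which I have checked above, together with confirming that $\{x_0\}$ is exactly the excluded vertex forming the base.
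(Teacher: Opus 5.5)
Your argument is correct and is essentially the paper's proof: the base sublevel complex is the point $\{x_0\}$, Lemma~\ref{lem:not_flat} handles non-flat $S$, the join $\dflk(S)*\dclk(S)$ is $(n-2)$-connected for flat $S$, and the Morse lemma finishes. The only differences are minor: the paper first reduces to finite sublevel complexes via Proposition~\ref{prop:filter} and gets contractibility from Whitehead, whereas you apply Lemma~\ref{lem:morse} directly with $s=\infty$, which the lemma as stated permits. You should also delete the erroneous ``$(n-2+1-1)=(n-1)$'' in your first bullet, since your own recheck correctly shows that the descending links are $(n-2)$-connected, which is exactly what is needed.
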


\begin{proof}
Every finite subset of $X$ lies in a sublevel complex $\Rips_t(X)^{f\le r}$ for some $r\ge \frac{1}{N}$. Thus by Proposition~\ref{prop:filter} it suffices to prove that the $\Rips_t(X)^{f\le r}$ are $(n-1)$-connected for all $r\ge \frac{1}{N}$. When $r=\frac{1}{N}$ this is just $\{x_0\}$, which is contractible, so by the Morse lemma it suffices to prove that $\dlk(S)$ is $(n-2)$-connected for all $S$ with $f(S)>\frac{1}{N}$, i.e., for all $S\ne\{x_0\}$. If $\len$ is non-constant on $S$ then Lemma~\ref{lem:not_flat} says $\dlk(S)$ is contractible. If $\len$ is constant on $S$ then our hypothesis says $\dclk(S)$ is $(n-|S|-1)$-connected, and $\dflk(S)$ is $(|S|-3)$-connected being a $(|S|-2)$-sphere, so $\dlk(S)$ is $(n-2)$-connected as desired. The contractibility statement is immediate from Whitehead's theorem.
\end{proof}

We can also now prove our sufficient condition for finite presentability, Theorem~\ref{thrm:fp}.

\begin{proof}[Proof of Theorem~\ref{thrm:fp}]
We want to show that $\Rips_t(G)$ is simply connected, which since $G$ acts geometrically on $\Rips_t(G)$ implies $G$ is finitely presented. We know that $\dclk(\{g\})$ is connected for all $g\ne 1$ and that $\dclk(\{g,g'\})$ is non-empty for all $g,g'\in G$ with $d(g,g')=t$ and $\len(g)=\len(g')$. By Corollary~\ref{cor:general_morse} all that is left to check is that $\dclk(\{g,g'\})$ is non-empty for all $g\ne g'$ in $G$ with $d(g,g')<t$ and $\len(g)=\len(g')$. Indeed, choose some element $h\in G$ with $d(h,g)=1$ and $\len(h)=\len(g)-1$. Now $d(h,g')\le t$, and so $\{g,g',h\}\in \dclk(\{g,g'\})$.

Now suppose the Cayley graph of $G$ has no odd-length cycles and there exists some odd $t\in\N$ such that $\dclk(\{g\})$ is connected for all $g\ne 1$. The missing assumption is that $\dclk(\{g,g'\})$ is non-empty for all $g,g'\in G$ with $d(g,g')=t$ and $\len(g)=\len(g')$. But this case never happens since a geodesic triangle with vertices $1,g,g'$ would be a cycle of length $t+2\len(g)$, which is odd.
\end{proof}

\section{Two-dimensional RAAGs}\label{sec:raags}

Now we focus on our test case, of two-dimensionsional RAAGs. In this section we recall some background on RAAGs, and establish some properties of two-dimensional RAAGs. Let $\Gamma$ be a finite simplicial graph. The \emph{right-angled Artin group (RAAG)} on $\Gamma$ is
\[
A_\Gamma \coloneqq \langle V(\Gamma) \mid vw=wv \text{ for all } \{v,w\}\in E(\Gamma)\rangle\text{.}
\]
For example if $\Gamma$ is the complete graph on $n$ vertices then $A_\Gamma=\Z^n$, and if $\Gamma$ is the edgeless graph on $n$ vertices then $A_\Gamma=F_n$.

Write $d$ for the word metric on $A_\Gamma$ coming from the generating set $V(\Gamma)$, and write $\len$ for the standard word length, i.e., $\len(g)=d(g,1)$. Note that since the defining relators $vwv^{-1}w^{-1}$ have even length, all relators have even length, i.e., the Cayley graph $\Cay(A_\Gamma)\coloneqq \Cay(A_\Gamma,V(\Gamma))$ of $A_\Gamma$ has no odd-length cycles.

\begin{definition}
A RAAG $A_\Gamma$ is \emph{two-dimensional} if $\Gamma$ has no $3$-cliques. We will also call $\Gamma$ \emph{triangle-free} in this case.
\end{definition}

This terminology comes from the fact that this is the condition for $A_\Gamma$ to have cohomological dimension at most $2$. Interesting examples include when $\Gamma$ is an $n$-cycle graph ($n\ge 4$), a tree, or a bipartite graph.

It is a standard fact that the Cayley graph $\Cay(A_\Gamma)$ is the $1$-skeleton of a CAT(0) cube complex and hence $\Cay(A_\Gamma)$ a \emph{median graph}, meaning one in which for any three vertices $x_1,x_2,x_3$ there is a unique vertex $m=\med(x_1,x_2,x_3)$, their \emph{median}, such that $m$ lies on a geodesic edge path from $x_i$ to $x_j$ for each $i\ne j$. See \cite{genevois} for much more on the connection between median graphs and CAT(0) cube complexes.

\begin{observation}[Squares are visible]\label{obs:no_weird_squares}
Let $g\in A_\Gamma$ and $a,b,c,d\in V(\Gamma)^\pm$ such that $abcd=1$ and the vertices $ga,gab,gabc,gabcd=g$ form an induced square in $\Cay(A_\Gamma)$. Then $c=a^{-1}$ and $d=b^{-1}$, so $a$ and $b$ commute, and moreover this square has a unique vertex at which $\len$ is maximized and a unique vertex at which $\len$ is minimized.
\end{observation}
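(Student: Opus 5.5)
The plan is to treat the two claims separately: the letter identities come from exponent sums in the abelianization, and the uniqueness of the extremes of $\len$ comes from the median property of $\Cay(A_\Gamma)$.

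\textbf{Letter identities.} First I use the abelianization $A_\Gamma\to\Z^{V(\Gamma)}$. Since $abcd=1$, for each $v\in V(\Gamma)$ the exponent sum of $v$ in the word $abcd$ is zero. Each letter is some $v^{\pm1}$, so the four letters can be partitioned into two pairs, each consisting of a letter and its inverse. There are three possible matchings:
\begin{itemize}
\item $\{a,b\},\{c,d\}$;
\item $\{b,c\},\{d,a\}$;
\item $\{a,c\},\{b,d\}$.
\end{itemize}
If $b=a^{-1}$ then $gab=g$, and if $c=b^{-1}$ then $gabc=ga$. Both contradict that the four vertices of the square are distinct, so the first matching is impossible. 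If $d=c^{-1}$ then $gabcd=gab$, contradicting $gabcd=g\ne gab$. If $a=d^{-1}$ then $ga=gd^{-1}=gabc$, again a contradiction. So the second matching is impossible too. Only the third matching remains, giving $c=a^{-1}$ and $d=b^{-1}$. Then $aba^{-1}b^{-1}=1$, so $a$ and $b$ commute.

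\textbf{Shape of the square.} The four vertices are $g$, $ga$, $gab$ and $gb$, since $gabc=gaba^{-1}=gb$. Write $L$ for the minimum of $\len$ on the square. Because $\Cay(A_\Gamma)$ is bipartite, $\len$ changes by exactly $1$ along each edge. Going around the square, the only possible patterns of $\len$-values are therefore:
\begin{itemize}
\item $L,L+1,L+2,L+1$, which has a unique maximum and a unique minimum;
\item $L,L+1,L,L+1$, where the two opposite vertices of value $L$ are both minima and the other two are both maxima.
\end{itemize}
So it suffices to rule out the second pattern.

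\textbf{Ruling out the bad pattern.} In the bad pattern, pick a vertex $x$ of value $L+1$ (for instance $x=g$). Its two neighbours $y,z$ in the square both have value $L$, and $d(y,z)=2$. Let $m=\med(1,y,z)$.
\begin{itemize}
\item Since $m$ lies on a geodesic from $y$ to $z$, and the square consists of induced edges, $m$ is a vertex of that geodesic. I expect that here one checks that $m$ lies in the square, i.e.\ $m\in\{x,y,z,w\}$ with $w$ the fourth vertex. Any geodesic of length $2$ from $y$ to $z$ passes through a common neighbour, and $m=y$ or $m=z$ are also possible. So really it suffices to note $m\in\{y,z\}$ or $m$ is a common neighbour of $y,z$.
\item Since $m$ lies on a geodesic from $1$ to $y$, we get $\len(m)\le L$. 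A common neighbour of $y$ and $z$ has $\len$ equal to $L\pm1$, so if $m$ is a common neighbour it must have value $L-1$, which is not immediately absurd. This is the step needing care.
\end{itemize}

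Here the median property gives the cleaner route. In a median graph, if two neighbours $y,z$ of $x$ both satisfy $d(1,y)=d(1,z)=d(1,x)-1$, then $y,z$ have a common neighbour $m'$ with $d(1,m')=d(1,x)-2$; this is the quadrangle condition. Uniqueness of medians then forces $m'=\med(x,y,z)$. But $x$ and $w$ are already common neighbours of $y$ and $z$, so $x$, $w$ and $m'$ all lie on geodesics between each pair among $x,y,z$. Checking the median condition, $\med(x,y,z)=x$, and also $w$ would be a second median of $y,z$ together with the appropriate third point. Concretely, both $x$ and $w$ serve as medians of the triple $(y,z,x)$ versus $(y,z,w)$ appropriately, which yields the contradiction with uniqueness. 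Alternatively, this is exactly the statement that median graphs contain no induced $K_{2,3}$, since $y,z$ would have three common neighbours $x,w,m'$.

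I expect this last step to be the main obstacle: the rigorous exclusion of the bad pattern via the no-$K_{2,3}$ property of median graphs combined with the quadrangle condition.
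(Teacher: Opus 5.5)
Your letter-identity argument is essentially the paper's. The paper uses the exponent-sum homomorphism $A_\Gamma\to\Z$ for $a$ to get $a^{-1}\in\{b,c,d\}$, rules out $b$ and $d$ by distinctness, and then argues analogously for $b$; your perfect-matching bookkeeping does the same thing all at once. For the uniqueness of the extremes you take a genuinely different route. The paper invokes normal-form facts for RAAGs to get $\len(g)-\len(gb)=\len(ga)-\len(gab)$, so opposite edges change $\len$ in the same direction, which gives the $L,L+1,L+2,L+1$ pattern at once. You use only that $\Cay(A_\Gamma)$ is a bipartite median graph. Bipartiteness reduces the problem to excluding the zigzag $L,L+1,L,L+1$, and you exclude it by finding a third common neighbour of the two low vertices. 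Your route is more general, since it works for any $4$-cycle in any median graph with any basepoint, and it avoids the word problem; the paper's is shorter given the cited facts.

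The closing paragraph has to be rewritten, because part of it is false. Uniqueness of medians does not force $m'=\med(x,y,z)$: $x$ is adjacent to both $y$ and $z$, so $\med(x,y,z)=x$. Also $w$ does not lie on the geodesic from $x$ to $y$, so the remarks about $x$ and $w$ both serving as medians give no contradiction. The argument that works is the $K_{2,3}$ alternative, and it follows in one line from uniqueness of medians:
\begin{itemize}
\item $x$, $w$, $m'$ are distinct, since $\len(m')=L-1$ while $\len(x)=\len(w)=L+1$;
\item they are pairwise at distance $2$, being distinct neighbours of $y$ in a bipartite graph;
\item $y$ and $z$ are each adjacent to all three, so both lie on a geodesic between every pair, making them two distinct medians of $(x,w,m')$, a contradiction.
\end{itemize}

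You also do not need to quote the quadrangle condition, because your abandoned first bullet already produces $m'$. Take $m=\med(1,y,z)$. If $m=y$, then $y$ lies on a geodesic from $1$ to $z$, forcing $\len(z)=L+2$; so $m\neq y$, and likewise $m\neq z$. Hence $m$ is a common neighbour of $y$ and $z$, and since it lies on a geodesic from $1$ to $y$, $\len(m)=L-1$. This is exactly the computation in the first three bullets of Proposition~\ref{prop:median_nbrs}, which do not use the observation, so there is no circularity. The value $L-1$ was not the delicate point: it is precisely what makes $m$ a third common neighbour, distinct from $x$ and $w$.
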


\begin{proof}
There is a well defined homomorphism $A_\Gamma\to\Z$ sending $a$ to $1$ and sending all generators not equal to $a^\pm$ to $0$. Since $abcd=1$, this shows that at least one of $b$, $c$, or $d$ must equal $a^{-1}$, and since our vertices form an induced square we must have $c=a^{-1}$. An analogous argument shows $d=b^{-1}$, and now $aba^{-1}b^{-1}=1$ shows $a$ and $b$ commute. Finally, standard facts about the word problem in RAAGs (see, e.g., \cite[Subsection~2.3]{charney}) show that $\len(g)-\len(gb)=\len(ga)-\len(gab)$, establishing that $\len$ achieves its maximum and minimum at unique vertices of the square.
\end{proof}

\begin{definition}[Descending neighbor, label]
For $g \in A_\Gamma$, a \emph{descending neighbor} of $g$ is a vertex $h \in A_\Gamma$ such that $d(g,h)=1$ and $\len(h) = \len(g) - 1$. The \emph{label} $v\in V(\Gamma)^\pm$ of a descending neighbor $h$ of $g$ is $v\coloneqq h^{-1}g$, that is, the generator or inverse generator that one multiplies on the right to move from $h$ to $g$. Note that distinct descending neighbors of $g$ must have different labels.
\end{definition}

\begin{proposition}\label{prop:median_nbrs}
Let $g\in A_\Gamma$ with $\len(g) \geq  2$, and let $h_1, h_2$ be distinct descending neighbors of $g$. Then the median $m \coloneqq \med(h_1, h_2, 1)$ satisfies the following:
\begin{itemize}
    \item $d(h_1, m) = d(h_2, m) = 1$,
    \item $\len(m) = \len(g) - 2$,
    \item $d(g, m) = 2$, and
    \item the labels of $h_1$ and $h_2$ (relative $g$) commute.
\end{itemize}
\end{proposition}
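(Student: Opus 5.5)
Our approach is to combine median-graph geometry in $\Cay(A_\Gamma)$ with Observation~\ref{obs:no_weird_squares}. Write $n\coloneqq\len(g)$, so $\len(h_1)=\len(h_2)=n-1$ and $d(h_1,h_2)=2$. Indeed, $h_1\ne h_2$ and the Cayley graph is bipartite, so $d(h_1,h_2)$ is even and at most $2$ (both are adjacent to $g$).

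\textbf{Step 1: compute the distances from $m$.} Since $m$ lies on geodesics between each pair of $h_1,h_2,1$, we have
\[
d(h_1,m)+d(m,h_2)=2,\quad d(h_1,m)+\len(m)=n-1,\quad d(h_2,m)+\len(m)=n-1.
\]
The last two equations give $d(h_1,m)=d(h_2,m)$. Together with the first, this forces $d(h_1,m)=d(h_2,m)=1$ and $\len(m)=n-2$. Then $d(g,m)\le 2$, and $d(g,m)\ge \len(g)-\len(m)=2$, so $d(g,m)=2$. This establishes the first three bullets.

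\textbf{Step 2: the labels commute.} The vertices $g,h_1,m,h_2$ form a closed edge path of length $4$. It is an induced square: $h_1\ne h_2$; $g\ne m$ since their distance is $2$; and the diagonals are not edges, since $d(g,m)=d(h_1,h_2)=2$. Let $a$ be the label of $h_1$, so $h_1=ga^{-1}$. Going around the square from $g$ gives letters $a^{-1}$, $x$, $y$, $b$ with $a^{-1}xyb=1$, where $b$ is the label of $h_2$ (so $h_2 b=g$). Observation~\ref{obs:no_weird_squares}, applied with base point $g$ and first letter $a^{-1}$, gives that the third letter is $y=a$ and the fourth is $b=x^{-1}$. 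So the path reads $a^{-1},b^{-1},a,b$, and $a$ and $b$ commute.

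\textbf{Main obstacle.} The only delicate point is verifying that the square is genuinely induced, so that the Observation applies. This is handled by the distance computations in Step 1, together with the bipartiteness of $\Cay(A_\Gamma)$, which rules out odd cycles. The hypothesis $\len(g)\ge 2$ guarantees that $\len(m)=n-2\ge 0$ makes sense; the equations force this anyway.
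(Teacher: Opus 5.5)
Your proof is correct and follows essentially the same route as the paper: the median distance equations give the first three bullets, and the resulting induced square $g,h_1,m,h_2$ yields commuting labels via Observation~\ref{obs:no_weird_squares}. Two small differences are improvements. You derive $d(h_1,m)=d(h_2,m)$ directly from the two equations involving $\len(m)$, which is a cleaner replacement for the paper's step ruling out $m=h_2$ by contradiction. Your explicit bookkeeping of the letters around the square also makes the application of the Observation more transparent than the paper's one-line appeal.
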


\begin{proof}
First note that $d(h_1, h_2) = 2$. Indeed, $d(h_1, h_2) \leq 2$ by the triangle inequality, $d(h_1, h_2) \neq 0$ because $h_1 \neq h_2$, and $d(h_1, h_2) \neq 1$ because otherwise $g,h_1,h_2$ would yield a $3$-cycle in $\Cay(A_\Gamma)$ (which has no odd-length cycles). Now by properties of the median we have:
\[
d(h_1, m) + d(m, h_2) = 2 \qquad d(h_1, m) + d(m, 1) = \len(g) - 1 = d(h_2, m) + d(m, 1) \text{.}
\]
If $d(h_2, m) = 0$ then $h_2 = m$ and we get $\len(h_1) = d(h_1, h_2) + \len(h_2) = 2 + (\len(g) - 1) = \len(g) + 1$, which is not the case, so $h_2 \neq m$, and an identical argument shows $h_1 \neq m$. Since $d(h_1, h_2) = 2$ we have that $d(h_1, m) = 1 = d(h_2, m)$. This further implies that $\len(m) = \len(g) - 2$. Finally, we have $d(g, m) \leq 2$ by the triangle inequality, and $\len(g) \leq d(g, m) + \len(m)$ implies that $d(g, m) \geq \len(g) - \len(m) = \len(g) - (\len(g)-2) = 2$.

Finally, these results show that the vertices $g,h_1,h_2,m$ form an induced square in $\Cay(A_\Gamma)$, and hence the labels of $h_1$ and $h_2$ commute by Observation~\ref{obs:no_weird_squares}.
\end{proof}

Since $\Cay(A_\Gamma)$ is connected, every non-trivial element of $A_\Gamma$ has at least one descending neighbor. The following shows that if $\Gamma$ is triangle-free then every element has at most two descending neighbors:

\begin{corollary}\label{cor:two_nbrs}
If some $g\in A_\Gamma$ has more than two descending neighbors then $\Gamma$ has a $3$-clique.
\end{corollary}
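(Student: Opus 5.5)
Suppose $g$ has three distinct descending neighbors $h_1,h_2,h_3$, with labels $v_1,v_2,v_3\in V(\Gamma)^\pm$. Since distinct descending neighbors have distinct labels, the $v_i$ are pairwise distinct. If $\len(g)\le 1$, then $g$ has at most one descending neighbor, namely $1$. So we may assume $\len(g)\ge 2$, and Proposition~\ref{prop:median_nbrs} applies to each pair $h_i,h_j$: the labels $v_i$ and $v_j$ commute for all $i\ne j$.

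Next I would check that the $v_i$ come from three pairwise distinct, and hence pairwise adjacent, vertices of $\Gamma$. This reduces to two points.

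First, two distinct labels cannot be $v$ and $v^{-1}$ for the same vertex $v$. If $h_1=gv^{-1}$ and $h_2=gv$ were both descending, then $\len(gv^{-1})=\len(gv)=\len(g)-1$. By Proposition~\ref{prop:median_nbrs}, $g,h_1,h_2,m$ would then form an induced square. Observation~\ref{obs:no_weird_squares} forces the square to have the form $c=a^{-1}$, $d=b^{-1}$ with $a,b$ the two labels up to inverses. That would make both sides of the square labeled by the same generator, which is impossible: it would give $gv^{-1}\cdot v^{2}$-type relations, and $v^2\ne 1$ forces $d(h_1,h_2)=2$ along a single generator line, so no induced square with four distinct vertices exists. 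More simply, $h_1$ and $h_2$ lie on the line $g\langle v\rangle$, and their median with $1$ would have to be $g$ or lie on that line. Either case contradicts $\len(m)=\len(g)-2$ together with $d(h_i,m)=1$, because $m$ would need to be adjacent to both $gv$ and $gv^{-1}$ and distinct from $g$, and the square $g,gv,m,gv^{-1}$ would have labels violating Observation~\ref{obs:no_weird_squares}.

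Second, commuting distinct generators of a RAAG are adjacent in $\Gamma$. This is the standard fact that for distinct vertices $v\ne w$, the elements $v$ and $w$ commute in $A_\Gamma$ if and only if $\{v,w\}\in E(\Gamma)$. One way to see it is to map onto $\langle v,w\rangle$, which is free if they are not adjacent, via the retraction killing the other generators.

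With both points, the underlying vertices of $v_1,v_2,v_3$ are three pairwise adjacent vertices of $\Gamma$, which form a $3$-clique.

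The main obstacle is the first point, ruling out $v$ and $v^{-1}$ as labels of two descending neighbors. Here I would rely on Observation~\ref{obs:no_weird_squares}: in the square it produces, the two label directions $a$ and $b$ satisfy $c=a^{-1}$ and $d=b^{-1}$. If $a=v^{\pm1}$ and $b=v^{\pm1}$, the square would consist of points of the bi-infinite line $g\langle v\rangle$, which contains no $4$-cycle.
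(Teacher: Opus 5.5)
Your proposal is correct and follows the paper's route: Proposition~\ref{prop:median_nbrs} makes the three labels pairwise commute, which gives a triangle in $\Gamma$. The extra points you check are details the paper's two-line proof leaves implicit, and you handle them soundly: that one may assume $\len(g)\ge 2$; that two labels cannot be $v$ and $v^{-1}$ (most cleanly, $g$ is the only common neighbor of $gv$ and $gv^{-1}$, contradicting $d(g,m)=2$); and that distinct commuting generators are adjacent in $\Gamma$.
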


\begin{proof}
Say $h_1,h_2,h_3$ are distinct descending neighbors of $g$, with labels $v_1,v_2,v_3$ respectively. By Proposition~\ref{prop:median_nbrs} the $v_i$ pairwise commute, which yields a $3$-clique in $\Gamma$.
\end{proof}

We will also be concerned with when multiple elements have a descending neighbor in common. Of course a necessary condition for this is that the elements have the same distance to the identity and are distance $2$ from each other. In the triangle-free case it turns out this is also sufficient, and the common descending neighbor is unique. We will prove this in steps. For the first step we do not need to assume $\Gamma$ is triangle-free.

\begin{lemma}\label{lem:exactly_two}
Let $g_1,g_2\in A_\Gamma$ with $\len(g_1)=\len(g_2)$ and $d(g_1,g_2)=2$. Then $g_1$ and $g_2$ have a unique descending neighbor in common.
\end{lemma}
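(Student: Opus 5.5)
Existence comes from the median of $g_1$, $g_2$ and $1$; uniqueness comes from the fact that two common descending neighbors would force an odd cycle or a non-visible square.

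\emph{Existence.} Let $m\coloneqq\med(g_1,g_2,1)$ and write $\ell\coloneqq\len(g_1)=\len(g_2)$. The median lies on geodesics between each pair, so
\[
d(g_1,m)+d(m,g_2)=2, \qquad d(g_1,m)+\len(m)=\ell=d(g_2,m)+\len(m).
\]
The second relation gives $d(g_1,m)=d(g_2,m)$. Combined with the first, both distances equal $1$. Hence $m$ is adjacent to $g_1$ and $g_2$ and has $\len(m)=\ell-1$, so $m$ is a common descending neighbor.

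\emph{Uniqueness.} Suppose $h\ne h'$ are both common descending neighbors of $g_1$ and $g_2$. Then $g_1,h,g_2,h'$ is a $4$-cycle in $\Cay(A_\Gamma)$. I first check that it is an induced square:
\begin{itemize}
\item $d(g_1,g_2)=2$ by hypothesis.
\item $d(h,h')$ is not $1$, since that edge together with $g_1$ would form a triangle, and the Cayley graph has no odd cycles.
\item Hence $d(h,h')=2$, and the four vertices are distinct.
\end{itemize}
On this square, $\len$ takes the value $\ell$ at $g_1$ and $g_2$, and $\ell-1$ at $h$ and $h'$. So the maximum is attained at two vertices, contradicting the final clause of Observation~\ref{obs:no_weird_squares}.

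To invoke that observation, the square must be written in the form $ga,gab,gabc,gabcd=g$. Take $g=g_1$ and let the letters $a,b,c,d\in V(\Gamma)^{\pm}$ be the successive labels read around the cycle.

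A more hands-on alternative is the following. Observation~\ref{obs:no_weird_squares} says that opposite edges of the square carry inverse labels. So with $h=g_1a^{-1}$ we get $g_2=g_1a^{-1}b$ and $h'=g_1b$ for commuting generators $a,b$, where $\len(g_1b)=\ell-1$ and $\len(g_1a^{-1})=\ell-1$. The unique-maximum statement from the word problem in RAAGs then contradicts $\len(g_1)=\len(g_2)$.

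The main obstacle is correctly matching the square to the hypotheses of Observation~\ref{obs:no_weird_squares}, namely that it is induced. This is handled by the parity argument above.
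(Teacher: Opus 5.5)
Your proof is correct. The uniqueness half is the paper's argument: two common descending neighbors give an induced square with two $\len$-maxima, contradicting Observation~\ref{obs:no_weird_squares}. You are slightly more careful than the paper in checking via parity that the square really is induced. Your ``hands-on alternative'' only unpacks that same observation, so it adds nothing.

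The existence half takes a different and shorter route. The paper takes the middle vertex $h$ of a length-$2$ path from $g_1$ to $g_2$. If $\len(h)=\ell-1$, $h$ is already the common descending neighbor. It rules out $\len(h)=\ell$ using the absence of odd cycles in $\Cay(A_\Gamma)$. In the remaining case $\len(h)=\ell+1$, it applies Proposition~\ref{prop:median_nbrs} at $h$ to get $\med(g_1,g_2,1)$.

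You instead write down the three geodesic equations for $m=\med(g_1,g_2,1)$ directly and read off $d(g_1,m)=d(g_2,m)=1$ and $\len(m)=\ell-1$. This is essentially the computation inside the proof of Proposition~\ref{prop:median_nbrs}, applied directly to the triple $g_1,g_2,1$. It therefore needs no case split, no parity argument and not the proposition itself, only the existence of medians.

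As a small bonus, your argument identifies the common descending neighbor as $\med(g_1,g_2,1)$ in every case. In the paper this is visible only in the $\len(h)=\ell+1$ case; in the other case it follows only afterwards, from uniqueness.
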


\begin{proof}
First we show existence. Since $d(g_1,g_2)=2$ there is a length-$2$ edge path from $g_1$ to $g_2$, say with middle vertex $h$. If $\len(h)=\len(g_1)-1$ then $h$ is a descending neighbor of both $g_1$ and $g_2$ and we are done. We cannot have $\len(h)=\len(g_1)$, since $\Cay(A_\Gamma)$ has no odd-length cycles, so we must have $\len(h)=\len(g_1)+1$. Now $g_1$ and $g_2$ are both descending neighbors of $h$, so by Proposition~\ref{prop:median_nbrs} $\med(g_1,g_2,1)$ is a descending neighbor of both $g_1$ and $g_2$.

For uniqueness, suppose $h_1\ne h_2$ are descending neighbors of $g_1\ne g_2$. Then the vertices $g_1,h_1,g_2,h_2$ form an induced square in $\Cay(A_\Gamma)$ with two vertices maximizing $\len$, which violates Observation~\ref{obs:no_weird_squares}.
\end{proof}

Now we focus on the triangle-free case. It is easier to deal with large collections of elements:

\begin{lemma}\label{lem:at_least_four}
Suppose $\Gamma$ has no $3$-cliques. Let $g_1,\dots,g_k\in A_\Gamma$ for $k\ge 4$ such that $d(g_i,g_j)=2$ for all $1\le i<j\le k$ and $\len(g_1)=\cdots=\len(g_k)$. Then there exists a unique $h\in A_\Gamma$ that is a descending neighbor of every $g_i$.
\end{lemma}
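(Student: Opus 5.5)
By Lemma~\ref{lem:exactly_two}, each pair $g_i,g_j$ with $i\ne j$ has a unique common descending neighbor; write it $h_{ij}$. Uniqueness of $h$ in the statement is then immediate: any $h$ that is a descending neighbor of every $g_i$ must equal $h_{12}$. So the content is existence, and the plan is to show that all the $h_{ij}$ coincide. Since the index set is connected under sharing an index, it suffices to show $h_{ij}=h_{ik}$ for any three distinct indices $i,j,k$, provided a fourth index is available. Applying this repeatedly (for instance $h_{12}=h_{13}=h_{34}=\cdots$) gives a common $h$.

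\textbf{The core step.} Fix distinct $i,j,k$ and suppose $h_{ij}\ne h_{ik}$.
\begin{itemize}
\item Then $h_{ij}$ and $h_{ik}$ are two distinct descending neighbors of $g_i$.
\item By Corollary~\ref{cor:two_nbrs}, $g_i$ has no other descending neighbors.
\item Look at $h_{jk}$. It is a descending neighbor of $g_j$ and of $g_k$. If it equals $h_{ij}$, then $h_{ij}$ is a common descending neighbor of $g_i$ and $g_k$, so by uniqueness $h_{ij}=h_{ik}$, a contradiction. The same argument rules out $h_{jk}=h_{ik}$.
\item So $h_{ij},h_{ik},h_{jk}$ are pairwise distinct. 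Then $g_j$ has descending neighbors $h_{ij}$ and $h_{jk}$, and $g_k$ has $h_{ik}$ and $h_{jk}$. The six vertices $g_i,h_{ij},g_j,h_{jk},g_k,h_{ik}$ form a $6$-cycle in $\Cay(A_\Gamma)$ alternating between length $\ell$ and $\ell-1$.
\end{itemize}

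\textbf{Pinning down the configuration.} Apply Proposition~\ref{prop:median_nbrs} at each $g$ to get medians two levels down.
\begin{itemize}
\item At $g_i$, the labels of $h_{ij}$ and $h_{ik}$ commute, say they are $a$ and $b$. Then $g_j=g_ia^{-1}x$ and $g_k=g_ib^{-1}y$ for generators $x,y$.
\item The claim to establish is that this hexagon configuration forces three pairwise commuting labels. I would then try to argue that the $g$'s are $g_0u,g_0v,g_0w$ for a single $g_0$ of length $\ell-1$ and pairwise commuting generators $u,v,w$ (the "corner of a $3$-cube" picture), which would give a triangle in $\Gamma$.
\item An alternative route is to use the median $\med(g_i,g_j,g_k)$ in the median graph. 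When three vertices are pairwise at distance $2$ with distinct pairwise midpoints, the median is either a common neighbor or the three lie on a cube. Since they all have the same length, the median structure of a CAT(0) cube complex should force the cube case, which needs $3$ commuting generators.
\end{itemize}

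\textbf{Where the fourth element is used, and the main obstacle.} The hexagon with bottom vertices $h_{ij},h_{jk},h_{ik}$ is probably realizable when $\Gamma$ is triangle-free. One candidate is the "upper" corner of a cube, whose common ascending neighbor exists, but this already needs commuting generators. So I expect that three elements alone may not suffice; otherwise the lemma would be stated for $k\ge 3$. The fourth element $g_l$ is what breaks the configuration:
\begin{itemize}
\item Each $g$ has at most two descending neighbors, by Corollary~\ref{cor:two_nbrs}.
\item $g_l$ has pairwise midpoints $h_{il},h_{jl},h_{kl}$ with the other three elements.
\item By the pigeonhole principle and the exclusion argument above, some coincidence such as $h_{il}=h_{jl}$ is forced. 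Uniqueness then gives $h_{il}=h_{ij}$, and so on.
\end{itemize}
Combining these coincidences should show that all six midpoints agree.

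The step I expect to be hardest is this combinatorial case analysis among four points. The plan is to rule out every pattern other than "all equal" using only the two facts available: each $g$ has at most two descending neighbors, and pairwise common descending neighbors are unique. If that analysis still leaves a consistent non-trivial pattern, I would fall back on the $3$-cube/median argument from the previous paragraph to rule it out via a triangle in $\Gamma$.
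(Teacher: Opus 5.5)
Your reduction is sound and matches the paper's argument. If $h_{ij}\ne h_{ik}$, then $h_{ij},h_{ik},h_{jk}$ are pairwise distinct, and by Corollary~\ref{cor:two_nbrs} the full descending-neighbor sets are $N_i=\{h_{ij},h_{ik}\}$, $N_j=\{h_{ij},h_{jk}\}$ and $N_k=\{h_{ik},h_{jk}\}$. The paper first disposes of the case where some $g_m$ has only one descending neighbor. It then treats the sets $N_m$ as $k$ distinct, pairwise intersecting edges, which for $k\ge 4$ must form a star rather than a triangle.

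The gap is that you never actually rule out the triangle. The four-point analysis is only announced (``should show''), with a fallback in case it leaves a consistent pattern. The median and cube routes are not carried out either. So the one nontrivial step is missing, although it closes quickly:
\begin{enumerate}
\item Take a fourth index $l$. Each $h_{ml}$ with $m\in\{i,j,k\}$ lies in $N_m$, and $|N_l|\le 2$, so two of them coincide.
\item By symmetry say $h_{il}=h_{jl}$. Uniqueness in Lemma~\ref{lem:exactly_two} forces this element to be $h_{ij}$.
\item Now $h_{kl}\in N_k$, so $h_{kl}\ne h_{ij}$ and $h_{kl}\in\{h_{ik},h_{jk}\}$.
\item Hence $g_l$ shares two distinct descending neighbors with $g_i$ or with $g_j$, contradicting Lemma~\ref{lem:exactly_two}.
\end{enumerate}

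Two of your side remarks are wrong. First, the hexagon is \emph{not} realizable for triangle-free $\Gamma$. It is exactly the configuration of Lemma~\ref{lem:hex}, which produces a $3$-clique. The paper uses that lemma to prove the same conclusion for $k=3$ (Lemma~\ref{lem:exactly_three}). The hypothesis $k\ge 4$ appears only because the proof there is purely combinatorial, not because the statement fails for three points.

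Second, your median idea works, but the dichotomy you state is incorrect. Suppose $g_i,g_j,g_k$ are pairwise at distance $2$ and set $m=\med(g_i,g_j,g_k)$. Then $d(g_i,m)+d(m,g_j)=2$ for each of the three pairs, and solving these three equations shows $m$ is adjacent to all three points. Bipartiteness of $\Cay(A_\Gamma)$ gives $\len(m)=\ell\pm 1$. If $\len(m)=\ell+1$, then $m$ would have three descending neighbors, contradicting Corollary~\ref{cor:two_nbrs}. So $m$ is a common descending neighbor, and uniqueness gives $h_{ij}=h_{ik}=h_{jk}=m$. This proves your core step without a fourth element and covers all $k\ge 3$ uniformly, bypassing Lemma~\ref{lem:hex}.
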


\begin{proof}
First we prove existence. Let $h_1,\dots,h_\ell$ be all the elements that arise as descending neighbors of some $g_i$. Consider the bipartite graph $\Delta$ with blue vertices $g_1,\dots,g_k$, red vertices $h_1,\dots,h_\ell$, and an edge from $g_j$ to $h_i$ whenever $h_i$ is a descending neighbor of $g_j$. By Lemma~\ref{lem:exactly_two} each pair of blue vertices is adjacent to a unique common red vertex (so in particular $\Delta$ is connected), and by Corollary~\ref{cor:two_nbrs} each blue vertex has degree at most $2$. If some blue vertex has degree $1$ then all other blue vertices must be adjacent to the one red vertex it is adjacent to, and we are done. Now suppose all blue vertices have degree $2$. Let $\Delta'$ be the simplicial graph with vertex set $h_1,\dots,h_\ell$ and an edge from $h_i$ to $h_j$ whenever $h_i$ and $h_j$ are both adjacent in $\Delta$ to some blue vertex (so $\Delta'$ essentially comes from erasing the blue vertices from $\Delta$). Now the hypotheses ensure that each pair of edges in $\Delta'$ share an endpoint. Since $k\ge 4$ we know $\Delta'$ is not a $3$-cycle graph, so the only way this can happen is if it is a star graph, which means some red vertex in $\Delta$ is adjacent to every blue vertex, and so we are done.

Uniqueness is immediate by applying Lemma~\ref{lem:exactly_two} to, say, $g_1$ and $g_2$.
\end{proof}

Finally we deal with the case of exactly three elements, after a preliminary technical lemma.

\begin{figure}[htb]\label{fig:hex}
\begin{tikzpicture}[line width=1pt]
\draw (0,0) -- (2,1) -- (4,0);
\draw[white,line width=5pt] (0,1) -- (2,0) -- (4,1);
\draw (0,0) -- (0,1) -- (2,0) -- (4,1) -- (4,0);
\filldraw (0,0) circle (1.5pt);
\filldraw (0,1) circle (1.5pt);
\filldraw (2,0) circle (1.5pt);
\filldraw (2,1) circle (1.5pt);
\filldraw (4,1) circle (1.5pt);
\filldraw (4,0) circle (1.5pt);
\node at (0,-0.3) {$x=xabcdef$};
\node at (0,1.3) {$xa$};
\node at (2,-0.3) {$xab$};
\node at (4,1.3) {$xabc$};
\node at (4,-0.3) {$xabcd$};
\node at (2,1.3) {$xabcde$};
\end{tikzpicture}
\caption{The situation in Lemma~\ref{lem:hex}.  For context this is a subgraph of $\Cay(A_\Gamma)$, and the measurement $\len$ is represented by vertical height.}
\end{figure}

\begin{lemma}\label{lem:hex}
Let $x\in A_\Gamma$ and $a,b,c,d,e,f\in V(\Gamma)^\pm$ such that $abcdef=1$ and the six vertices
\[
xa,xab,xabc,xabcd,xabcde,xabcdef=x
\]
are pairwise distinct. Suppose that $\len(x)=\len(xab)=\len(xabcd)$ and $\len(xa)=\len(xabc)=\len(xabcde)=\len(x)+1$. Then $\Gamma$ has a $3$-clique.
\end{lemma}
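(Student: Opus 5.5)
The plan is to take the three ``low'' vertices $g_1=x$, $g_2=xab$, $g_3=xabcd$ and locate their median. Then I split according to whether the median sits one level above or one level below them. In each case I produce either three descending neighbors of a single vertex, or three pairwise commuting labels at a single vertex.

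First I check that $d(g_i,g_j)=2$ for all $i\ne j$. Each pair of $g_i$ has a common neighbor among the three ``high'' vertices $xa,xabc,xabcde$, so $d(g_i,g_j)\le 2$. The $g_i$ are distinct by hypothesis, and $d(g_i,g_j)\ne 1$ because $\Cay(A_\Gamma)$ has no odd-length cycles and the $g_i$ have equal $\len$. Since $\Cay(A_\Gamma)$ is a median graph, let $m=\med(g_1,g_2,g_3)$. By the defining property of the median, $d(g_i,m)+d(m,g_j)=2$ for all $i\ne j$. Summing the three equations gives $\sum_i d(g_i,m)=3$, and then each pairwise equation forces $d(g_i,m)=1$ for every $i$. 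So $m$ is a common neighbor of all three $g_i$, and by parity $\len(m)=\len(x)\pm1$.

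If $\len(m)=\len(x)+1$, then $g_1,g_2,g_3$ are three distinct descending neighbors of $m$, and Corollary~\ref{cor:two_nbrs} immediately gives a $3$-clique.

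If $\len(m)=\len(x)-1$, then $m$ is a common descending neighbor of all three $g_i$. Write $g_1=mu$, $g_2=mv$, $g_3=mw$ with $u,v,w\in V(\Gamma)^\pm$; these are pairwise distinct since the $g_i$ are distinct. Now consider the $4$-cycles $m,g_1,xa,g_2$, then $m,g_2,xabc,g_3$, and $m,g_3,xabcde,g_1$. Each is an induced square: its vertices are distinct (by the level count), its diagonals have length $2$ (for instance $d(m,xa)=2$ by $\len$, and $d(g_1,g_2)=2$), and there are no odd cycles. Applying Observation~\ref{obs:no_weird_squares} to each square, read from $m$, shows that $u,v$ commute, $v,w$ commute, and $u,w$ commute. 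It also shows that the square has the form $m,mu,muv,mv$ with opposite edges carrying inverse labels, so the two labels at $m$ cannot be mutually inverse (that would collapse $muv$ to $m$). Hence $u,v,w$ are pairwise distinct and pairwise not mutually inverse, so they represent three distinct vertices of $\Gamma$. Distinct generators commute in $A_\Gamma$ exactly when they span an edge of $\Gamma$, so these three vertices span a $3$-clique.

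The main obstacle I expect is the bookkeeping in the second case. I need to verify that each of the three $4$-cycles really is an induced square, so that Observation~\ref{obs:no_weird_squares} applies, and I need to extract from it both commutation and the fact that the three labels lie over distinct vertices of $\Gamma$ rather than being inverses of one another. The median computation and the first case are routine.
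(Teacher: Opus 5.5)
Your proof is correct, and it takes a genuinely different route from the paper's.

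The paper works directly with the six letters. It applies Proposition~\ref{prop:median_nbrs} at the three high vertices to get $ab=ba$, $cd=dc$, $ef=fe$. It then uses the exponent-sum map $A_\Gamma\to\Z$ for $a$, together with distinctness and a length comparison (if $c=a^{-1}$ then $xabc=xb$, which lies below $x$), to force $d=a^{-1}$. Symmetrically $e=b^{-1}$ and $f=c^{-1}$, so $a,b,c$ commute pairwise.

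You instead pass to the median $m$ of the three low vertices and show it is adjacent to all of them. You then read the three induced squares at $m$ through Observation~\ref{obs:no_weird_squares}, so the three pairwise commuting letters appear as labels at a single vertex.

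Your route avoids the letter-by-letter elimination. It also makes explicit that the three letters lie over three distinct vertices of $\Gamma$. The paper's final sentence leaves this implicit; for instance, excluding $a=b$ needs the fact that two descending neighbors cannot carry mutually inverse labels. The paper's argument, in exchange, only ever uses medians involving the identity. It also produces the explicit relation $d=a^{-1}$, $e=b^{-1}$, $f=c^{-1}$ among the letters, which says more about the shape of the hexagon.

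Incidentally, your first case never actually occurs. If $\len(m)=\len(x)+1$, then $m$ and $xa$ are both common neighbors of $x$ and $xab$ at the upper level. The uniqueness-of-maximum clause of Observation~\ref{obs:no_weird_squares} then forces $m=xa$, and likewise $m=xabc$, contradicting distinctness. Your appeal to Corollary~\ref{cor:two_nbrs} in that case is still logically valid, so nothing needs fixing.
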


\begin{proof}
See Figure~\ref{fig:hex} for an idea of this setup. Applying Proposition~\ref{prop:median_nbrs} to $xa$, $xabc$, and $xabcde$ we see that $ab=ba$, $cd=dc$, and $ef=fe$. Looking at the map $A_\Gamma\to\Z$ sending $a$ to $1$ and all generators not equal to $a^\pm$ to $0$, in order for $abcdef=1$ to hold we must have that $a^{-1}\in\{b,c,d,e,f\}$. Since our six vertices are distinct, $a^{-1}\ne b$ and $a^{-1}\ne f$. Since $\len(xbac)=\len(xabc)=\len(x)+1$ and Proposition~\ref{prop:median_nbrs} says $\len(xb)=\len(x)-1$, we see that $a^{-1}\ne c$. Similarly $a^{-1}\ne e$ (by symmetry). We conclude that $a^{-1}=d$. By an analogous argument $b^{-1}=e$ and $c^{-1}=f$. Now $a$, $b$, and $c$ all commute with each other and we have a $3$-clique in $\Gamma$.
\end{proof}

\begin{lemma}\label{lem:exactly_three}
Suppose $\Gamma$ has no $3$-cliques. Let $g_1,g_2,g_3\in A_\Gamma$ such that $d(g_i,g_j)=2$ for all $1\le i<j\le 3$ and $\len(g_1)=\len(g_2)=\len(g_3)$. Then there exists a unique $h\in A_\Gamma$ that is a descending neighbor of every $g_i$.
\end{lemma}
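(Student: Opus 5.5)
Uniqueness is immediate from Lemma~\ref{lem:exactly_two} applied to $g_1$ and $g_2$: any common descending neighbor of all three is in particular the unique common descending neighbor of $g_1,g_2$. So the work is existence, and I will argue by contradiction using Lemma~\ref{lem:hex}.

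By Lemma~\ref{lem:exactly_two}, for each pair $\{i,j\}$ there is a unique common descending neighbor $h_{ij}$ of $g_i$ and $g_j$. If two of these coincide, say $h_{12}=h_{13}$, then this element is a descending neighbor of all three $g_i$ and we are done. So suppose $h_{12},h_{13},h_{23}$ are pairwise distinct. Then $g_1,h_{12},g_2,h_{23},g_3,h_{13}$ is a closed edge path of length $6$ in $\Cay(A_\Gamma)$. Its six vertices are pairwise distinct: the $g_i$ are distinct since $d(g_i,g_j)=2$, the $h_{ij}$ are distinct by assumption, and a $g$ cannot equal an $h$ since their $\len$ values differ by $1$. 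The $\len$ values alternate between $\len(g_1)$ and $\len(g_1)-1$.

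To apply Lemma~\ref{lem:hex} I set $x=h_{13}$ and read off the labels $a,\dots,f$ of the successive edges: $x a=g_1$, $xab=h_{12}$, $xabc=g_2$, $xabcd=h_{23}$, $xabcde=g_3$, and $xabcdef=x$. Then $abcdef=1$, the six vertices are pairwise distinct, $\len(x)=\len(xab)=\len(xabcd)=\len(g_1)-1$, and $\len(xa)=\len(xabc)=\len(xabcde)=\len(x)+1$. Lemma~\ref{lem:hex} therefore produces a $3$-clique in $\Gamma$, contradicting triangle-freeness. Hence two of the $h_{ij}$ coincide, which gives existence.

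The main thing to check is that the hexagon meets the hypotheses of Lemma~\ref{lem:hex} exactly, namely pairwise distinctness of the vertices and the correct alternation of $\len$. Both follow from the case split above, so I do not expect a real obstacle; the substantive work is already contained in Lemma~\ref{lem:hex}.
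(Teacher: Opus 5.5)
Your proof is correct and follows essentially the same route as the paper: take the pairwise common descending neighbors, use Lemma~\ref{lem:hex} on the resulting hexagon to rule out their being pairwise distinct, and get uniqueness from Lemma~\ref{lem:exactly_two}. Your version is slightly more streamlined. Because you define $h_{ij}$ as the unique common descending neighbor, any coincidence immediately gives existence, so the paper's extra appeal to Observation~\ref{obs:no_weird_squares} is not needed. You also explicitly check the distinctness and $\len$ hypotheses of Lemma~\ref{lem:hex}, which the paper leaves implicit.
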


\begin{proof}
By Proposition~\ref{prop:median_nbrs} we can choose $h_1,h_2,h_3$ such that $h_i$ and $h_{i+1}$ are both descending neighbors of $g_i$ (subscripts mod $3$) for all $i$. Applying Lemma~\ref{lem:hex} to the labels, we see that $h_1,h_2,h_3$ cannot be pairwise distinct, say without loss of generality that $h_3=h_2$. Now by Observation~\ref{obs:no_weird_squares}, since $\len(h_1)=\len(h_2)$ and $\len(g_1)=\len(g_3)$ we see that $h_1$, $g_1$, $h_2$, and $g_3$ cannot form an induced square, so $h_1=h_2$ and we have found a common descending neighbor. Uniqueness is immediate by applying Lemma~\ref{lem:exactly_two} to, say, $g_1$ and $g_2$.
\end{proof}

\begin{corollary}\label{cor:unique}
Suppose $\Gamma$ has no $3$-cliques. Let $S\subseteq A_\Gamma$ be finite such that $|S|\ge 2$, $d(g,g')=2$ for all $g\ne g'$ in $S$, and $\len$ is constant on $S$. Then the elements of $S$ have a unique common descending neighbor.
\end{corollary}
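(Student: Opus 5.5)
This corollary follows by splitting on $|S|$ and citing the three preceding lemmas. Write $k\coloneqq|S|$; $k$ is finite and $k\ge 2$. List the elements of $S$ as $g_1,\dots,g_k$. By hypothesis these are pairwise at distance $2$ and share a common value of $\len$. The word ``unique'' in the statement means there is exactly one element $h\in A_\Gamma$ that is a descending neighbor of every element of $S$.

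If $k=2$, then $\len(g_1)=\len(g_2)$ and $d(g_1,g_2)=2$. This is exactly the hypothesis of Lemma~\ref{lem:exactly_two}, which gives a unique common descending neighbor. This case needs no triangle-free assumption.

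If $k=3$, the hypotheses are exactly those of Lemma~\ref{lem:exactly_three}, which uses that $\Gamma$ has no $3$-cliques. It gives a unique $h$ that is a descending neighbor of every $g_i$.

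If $k\ge 4$, Lemma~\ref{lem:at_least_four} applies verbatim and gives the same conclusion.

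Uniqueness also holds uniformly, independently of the case split. Any common descending neighbor of all of $S$ is in particular a common descending neighbor of $g_1$ and $g_2$. Lemma~\ref{lem:exactly_two} says $g_1$ and $g_2$ have only one such neighbor, so there is at most one choice of $h$.

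I expect no real obstacle. The substantive work is already contained in Lemmas~\ref{lem:exactly_two}, \ref{lem:at_least_four} and~\ref{lem:exactly_three}, and the corollary is just their case-by-case combination.
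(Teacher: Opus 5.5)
Your proposal is correct and is exactly the paper's argument: the corollary is the case split $|S|=2$, $|S|=3$, $|S|\ge 4$, handled by Lemmas~\ref{lem:exactly_two}, \ref{lem:exactly_three}, and~\ref{lem:at_least_four} respectively. Your uniform uniqueness remark, which applies Lemma~\ref{lem:exactly_two} to $g_1,g_2$, is also precisely how those lemmas establish uniqueness.
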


\begin{proof}
This is the combination of Lemmas~\ref{lem:exactly_two}, \ref{lem:at_least_four}, and~\ref{lem:exactly_three}, for $|S|=2$, $|S|\ge 4$, and $|S|=3$ respectively.
\end{proof}

\section{Vietoris--Rips complexes of two-dimensional RAAGs}\label{sec:VR_raags}

In this section we prove Theorem~\ref{thrm:main_raags}, that $\Rips_2(A_\Gamma)$ is contractible for $\Gamma$ triangle-free. For the rest of the paper, fix a finite simple graph $\Gamma$ with no $3$-cliques, and a vertex $S\ne\{1\}$ of $\Rips_2(A_\Gamma)$ such that $\len$ is constant on $S$, so with Corollary~\ref{cor:general_morse} in mind we want to prove that $\dclk(S)$ is contractible. The $|S|=1$ and $|S|\ge 2$ cases turn out to work differently, since thanks to Corollary~\ref{cor:unique} we know that in the latter case the elements of $S$ have a unique common descending neighbor. Let us handle this situation first.

\begin{proposition}\label{prop:unique_d_nbr}
If $\len$ is constant on $S$ and the elements of $S$ have a unique common descending neighbor, then $\dclk(S)$ is contractible. In particular this happens if $\len$ is constant on $S$ and $|S|\ge 2$.
\end{proposition}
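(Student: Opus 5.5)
Let $h$ be the unique common descending neighbor of the elements of $S$, and let $\ell=\len$ on $S$. The plan is to show that the poset $Q$ whose realization is $\dclk(S)$ deformation retracts onto elements containing $h$, and then onto $S\cup\{h\}$. Here
\[
Q=\{S'\supsetneq S \mid \diam(S')\le 2,\ \len_{min}(S')<\ell\}.
\]
We will apply Citation~\ref{cit:quillen_poset} twice, or once via a composite.

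First, $S\cup\{h\}\in Q$. Its diameter is at most $2$, since each element of $S$ is adjacent to $h$ and elements of $S$ are pairwise within $2$. Also $\len(h)=\ell-1<\ell$.

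Next we check that for every $S'\in Q$ the set $S'\cup\{h\}$ still has diameter at most $2$, so that $\phi(S')=S'\cup\{h\}$ is a poset map $Q\to Q$ with $S'\le\phi(S')\ge S\cup\{h\}$. Citation~\ref{cit:quillen_poset} then gives contractibility. So fix $S'\in Q$ and $y\in S'$. We need $d(y,h)\le 2$.

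Since $\diam(S')\le 2$ and $S'$ contains elements of length $\ell$ together with some element of length below $\ell$, we have $\len(y)\le \ell+2$. Also $\len(y)\ge \ell-2$, with values constrained further by distances to points of $S$.

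The key case is $y$ with $\len(y)<\ell$. Then $\len(y)\in\{\ell-1,\ell-2\}$ (parity rules out some values). Consider first $\len(y)=\ell-1$. Then $d(y,g)\le 2$ for all $g\in S$, and parity (no odd cycles) forces $d(y,g)=1$. So $y$ is a common descending neighbor of all of $S$, hence $y=h$ by uniqueness. Now consider $\len(y)=\ell-2$. Then $d(y,g)=2$ for all $g\in S$, so geodesics from $g$ to $y$ pass through descending neighbors of $g$. I then want to show $d(y,h)\le 2$; in fact I expect $y$ to be adjacent to some descending neighbor of each $g$.

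In general, elements $y$ of length $\ge \ell$ must also be controlled. This is the main obstacle: showing $d(y,h)\le 2$ for every $y\in S'$, not just the low ones. Here I would use the median structure. Pick $z\in S'$ with $\len(z)<\ell$; by the above, $z$ is close to $h$. For arbitrary $y$, consider $m=\med(y,g,1)$ or $\med(y,h,z)$, and use that geodesics to $1$ from $g$ can go through $h$.

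If a direct distance bound fails, the fallback is a two-step retraction. First replace $S'$ by the subposet where one adds a suitable descending element. Then retract $S'\mapsto S'\cap(\text{ball})\cup\{h\}$, using Citation~\ref{cit:quillen_poset} in its "$\ge\phi\le$" form, checking monotonicity carefully.

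The final claim, for $|S|\ge2$, follows from Corollary~\ref{cor:unique} once we note that flat $S$ of diameter $\le2$ with $|S|\ge2$ has pairwise distances exactly $2$. Distance $1$ is excluded by parity, since the elements of $S$ have equal $\len$.
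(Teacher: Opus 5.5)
\textbf{There is a genuine gap: the step that carries all the content is never proved.} Your framework is the paper's: the map $\phi(S')=S'\cup\{h\}$, with $S'\subseteq\phi(S')\supseteq S\cup\{h\}$, fed into Citation~\ref{cit:quillen_poset}. Your parity argument for the ``in particular'' clause is also fine. But the proof consists of showing that $\phi$ is well defined, i.e.\ that $d(y,h)\le 2$ for every $y\in S'$, and you verify this only when $\len(y)=\ell-1$. That case actually settles $S'$ outright, since then $y=h\in S'$. Otherwise $S'$ contains an element of length $\ell-2$, so every element of $S'$ has length $\ell-2$ or $\ell$, and by parity you need $h$ to be \emph{adjacent} to all of them.

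For the high elements you only appeal to the median structure, and that cannot suffice. The claim genuinely needs $\Gamma$ to be triangle-free, which your argument never uses. In $\Z^3=\langle a,b,c\rangle$, whose Cayley graph is also median, $S=\{ab,bc\}$ is flat with unique common descending neighbor $b$. The set $S'=\{1,ab,bc,ac\}$ has diameter $2$ and lies in $\dclk(S)$, yet $d(ac,b)=3$, so $\phi$ is not defined. The paper handles a high $g_1\notin S$ as follows. For each $g\in S$, let $h_g$ be the unique common descending neighbor of $g_1$ and $g$ (Lemma~\ref{lem:exactly_two}). If none of the $h_g$ equals $h$, they are not all equal, since a common value would be a common descending neighbor of $S$. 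So some $g\ne g'$ in $S$ make $g_1,h_g,g,h,g',h_{g'}$ six distinct vertices as in Lemma~\ref{lem:hex}, giving a $3$-clique. The low elements are treated separately via Corollary~\ref{cor:unique}.

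Your fallback is actually the more economical idea, but as written it is not an argument. With $B(h,2)$ the ball of radius $2$ about $h$, the set $(S'\cap B(h,2))\cup\{h\}$ is in general neither above nor below $S'$. So Citation~\ref{cit:quillen_poset} does not apply to it in either form. What works is two steps:
\begin{enumerate}
\item The map $r(S')=S'\cap B(h,2)$ is a poset map with $r(S')\subseteq S'$, and it is the identity on $Q_0=\{S'\in Q\mid S'\subseteq B(h,2)\}$. Hence $|Q_0|\simeq|Q|$, because comparable poset maps are homotopic.
\item On $Q_0$ your $\phi$ is defined, and Citation~\ref{cit:quillen_poset} applies.
\end{enumerate}
The missing input is that $r(S')$ still lies in $Q$, i.e.\ still contains an element of length $<\ell$. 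This is exactly the low-element claim you left as an expectation: every $g_0\in S'$ with $\len(g_0)=\ell-2$ is adjacent to $h$. It does hold. For $g_1\ne g_2$ in $S$, all pairwise distances among $g_1,g_2,g_0$ equal $2$, so $\med(g_1,g_2,g_0)$ is adjacent to all three. It is therefore a common descending neighbor of $g_1$ and $g_2$, and equals $h$ by Lemma~\ref{lem:exactly_two}. If $|S|=1$, the midpoint of a geodesic from $g$ to $g_0$ is a descending neighbor of $g$, hence is $h$. Completed this way, your fallback would give a shorter proof that never needs Lemma~\ref{lem:hex}. None of these steps appears in your proposal, so as it stands the key step is missing on both routes.
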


\begin{proof}
Write $n\coloneqq \len(g)$ for some/any $g\in S$. Say the unique common descending neighbor is $h$, and we claim that for all $S'\in \dclk(S)$ also $S'\cup\{h\}\in \dclk(S)$. Since $\len(h)<n$, $S'\cup\{h\}$ will be descending as soon as it is a legitimate element, i.e., we just need to check that $h$ is within distance $2$ of every element of $S'$. Note that for $S'\supseteq S$ to be in $\dclk(S)$ there must be some $g_0\in S'$ with $\len(g_0)<n$, and we must have $d(g',g'')\le 2$ for all $g',g''\in S'$. In particular $\len(g_0)$ can only be $n-1$ or $n-2$.

First suppose $\len(g_0)=n-1$ for some $g_0\in S'$. Since $\Cay(A_\Gamma)$ has no odd-length cycles, the distance from $g_0$ to any element of $S$ must be odd, and so the only possibility is distance $1$. Thus $g_0$ is a descending neighbor of every element of $S$, so by our uniqueness assumption $g_0=h$. Thus $S'\cup\{h\}=S'$ and we are done in this case.

Now assume $\len(g')$ is either $n$ or $n-2$ for each $g'\in S'$, call these the \emph{high} and \emph{low} elements respectively. We know that all the elements of $S$ are high, and that $S'$ has at least one low element. We must prove that $d(g',h)=1$ for all $g'\in S'$. First we prove this for the low elements; fix a low element $g_0\in S'$. For each high $g'\in S'$ (so in particular for all $g'\in S$), let $h_{g'}$ be the point in the middle of some length-$2$ path from $g'$ to $g_0$. The $h_{g'}$ all have $\len(h_{g'})=n-1$, and $g_0$ is a common descending neighbor of all of them. Let $T=\{h_{g'}\mid g'\in S'$ with $\len(g')=n\}\cup\{h\}$, so the elements of $T$ are pairwise distance $2$ from each other, and $\len$ is constant $\len=n-1$ on $T$. By Corollary~\ref{cor:unique} either $|T|=1$ or the elements of $T$ have a unique common descending neighbor. If $|T|=1$ then $d(h,g_0)=1$ and we are done. Suppose instead that the elements of $T$ have a unique common descending neighbor $g_0'$. Since $g)$ and $g_0'$ are both descending neighbors of all the $h_{g'}$, by Corollary~\ref{cor:unique} either $g_0'=g_0$ and again we are done, or else the $h_{g'}$ are all equal to each other. In this case they provide a common descending neighbor of $S$, and so by our hypothesis they equal $h$ and once again we are done. We note that, now that we know all the low elements have distance $1$ to $h$, they are descending neighbors of $h$, so by Corollary~\ref{cor:two_nbrs} there are at most two low elements.

Now we must prove that every high element of $S'$ has distance $1$ to $h$. Fix a high element $g_1$; since $h$ is a descending neighbor of every element of $S$ we can assume $g_1\not\in S$. For each $g\in S$, since $d(g_1,g)=2$ and $\len(g_1)=\len(g)=n$, by Proposition~\ref{prop:median_nbrs} there is a length-$2$ path from $g_1$ to $g$ passing through a point $h_g$ with $\len$ value $n-1$. By Observation~\ref{obs:no_weird_squares} this path, and hence $h_g$, is uniquely determined by $g_1$ and $g$. If $|S|=1$ then by uniqueness of $h$, $h_g=h$ and we are done, so assume $|S|\ge 2$. If the $h_g$ all equal each other then they are a common descending neighbor of every element of $S$, so by uniqueness of $h$ they equal $h$ and we are done. Similarly if some $h_g$ equals $h$ then we are done, so at this point we can assume that for some $g\ne g'$ in $S$ the elements $h_g$, $h_{g'}$, and $h$ are all distinct. Now the elements $g_1$, $h_g$, $g$, $h$, $g'$, and $h_{g'}$ satisfy the hypotheses of Lemma~\ref{lem:hex}, contradicting that $\Gamma$ has no $3$-cliques. Hence the final case cannot happen, and we conclude that in fact $d(g_1,h)=1$ as desired.

Now that we know that for all $S'\in \dclk(S)$ also $S'\cup\{h\}\in \dclk(S)$, we have a well defined poset map $\phi\colon S'\mapsto S'\cup\{h\}$ on the underlying poset whose geometric realization is $\dclk(S)$. Since $S'\subseteq \phi(S')\supseteq S\cup\{h\}$, and $S\cup\{h\}\in \dclk(S)$, Citation~\ref{cit:quillen_poset} says $\dclk(S)$ is contractible.

The ``in particular'' part of the statement follows from Corollary~\ref{cor:unique}.
\end{proof}

Next we handle the $|S|=1$ case, which is more difficult thanks to the potential lack of uniqueness of descending neighbors. First we need one last technical lemma (we reiterate that $\Gamma$ has no $3$-cliques).

\begin{figure}[htb]\label{fig:straight_hex}
\begin{tikzpicture}[line width=1pt]
\draw (0,0) -- (-1,1) -- (-1,2) -- (0,3) -- (1,2) -- (1,1) -- (0,0);
\filldraw (0,0) circle (1.5pt);
\filldraw (-1,1) circle (1.5pt);
\filldraw (-1,2) circle (1.5pt);
\filldraw (0,3) circle (1.5pt);
\filldraw (1,2) circle (1.5pt);
\filldraw (1,1) circle (1.5pt);
\node at (0,-0.3) {$x=xabcdef$};
\node at (-1.4,1) {$xa$};
\node at (-1.5,2) {$xab$};
\node at (0,3.3) {$xabc$};
\node at (1.6,2) {$xabcd$};
\node at (1.7,1) {$xabcde$};
\end{tikzpicture}
\caption{The situation in Lemma~\ref{lem:straight_hex}. For context this is a subgraph of $\Cay(A_\Gamma)$, and the measurement $\len$ is represented by vertical height.}
\end{figure}

\begin{lemma}\label{lem:straight_hex}
Let $x\in A_\Gamma$ and $a,b,c,d,e,f\in V(\Gamma)^\pm$ such that $abcdef=1$ and the six vertices
\[
xa,xab,xabc,xabcd,xabcde,xabcdef=x
\]
are pairwise distinct. Suppose $\len(x)<\len(xa)<\len(xab)<\len(xabc)>\len(xabcd)>\len(xabcde)>\len(x)$. Then $a,b,c,d,e,f\in\{v,w\}^\pm$ for some commuting $v,w\in V(\Gamma)$. In particular $xa$ and $xabcde$ are either both descending neighbors of $xab$ or of $xabcd$.
\end{lemma}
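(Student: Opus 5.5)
The plan is to turn the hexagon into a statement about two reduced words for the same group element, and then use the standard description of reduced words in RAAGs. Write $n\coloneqq\len(x)$. Since $\Cay(A_\Gamma)$ has no odd cycles, consecutive vertices differ in $\len$ by exactly $1$. The strict inequalities therefore force
\[
\len(xa)=\len(xabcde)=n+1,\qquad \len(xab)=\len(xabcd)=n+2,\qquad \len(xabc)=n+3 .
\]
So $d(x,xabc)=3$, and both sides of the hexagon are geodesics from $x$ to $xabc$. Equivalently, $abc$ and $f^{-1}e^{-1}d^{-1}$ are two reduced words of length $3$ representing $x^{-1}\cdot xabc$.

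By the solution to the word problem (e.g.\ \cite[Subsection~2.3]{charney}), two reduced words for the same element are related by a sequence of swaps of adjacent commuting letters. Hence $(f^{-1},e^{-1},d^{-1})$ is a rearrangement of $(a,b,c)$ obtained by such swaps. Distinctness of the six vertices gives $f^{-1}\neq a$ (since $xa\neq xabcde$) and $c\neq d^{-1}$ (since $xab\neq xabcd$). So the rearrangement moves both the first and the last letter, which leaves only three cases: $cba$, $bca$ and $cab$.
\begin{itemize}
\item For $cba$, the letters $a,b,c$ must pairwise commute. Triangle-freeness then forces their underlying generators to lie in a set $\{v,w\}$ of two commuting generators.
\item For $bca$ (and symmetrically $cab$), $a$ commutes with $b$ and with $c$. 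I would then use Proposition~\ref{prop:median_nbrs} at $xabc$: its two descending neighbors $xab$ and $xabcd$ have commuting labels. I would also use Corollary~\ref{cor:two_nbrs} at $xab$ and $xabcd$, to pin down whether $b$ and $c$ share an underlying generator or commute.
\end{itemize}

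The main obstacle is exactly this $bca$ case, and I am not sure it can be closed. Consider $x=1$ and $\Gamma$ the path $b$---$a$---$c$. This $\Gamma$ is triangle-free, and taking $x=1$ with the word $abc$ (so $d=a^{-1}$, $e=c^{-1}$, $f=b^{-1}$) appears to satisfy all the hypotheses. Yet $b$ and $c$ are distinct non-commuting generators, so the first conclusion of the lemma would fail. So I would first double-check this example. If it stands, the first assertion needs an extra hypothesis, and I would aim only for the ``in particular'' assertion, which is what is used later.

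The ``in particular'' assertion can be handled uniformly across the three cases. In each case the first letter $f^{-1}$ of the right-hand word is $b$ or $c$ and commutes with $a$. I would then show directly that $xabcde=xf^{-1}$ is adjacent to $xab$ (case $bca$) or to $xabcd$ (case $cab$; in case $cba$ both work). For instance, in case $bca$ we have $f^{-1}=b$, so $d(xab,xb)=\len(b^{-1}ab)=\len(a)=1$ because $a$ and $b$ commute. Since $\len(xb)=n+1$, $xb$ is a descending neighbor of $xab$, as is $xa$. I would carry out this verification case by case, comparing lengths only.
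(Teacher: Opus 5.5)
\textbf{Verdict.} Your counterexample is valid, so the first assertion of the lemma is false as stated; the ``in particular'' clause is true, and your argument proves it after two small corrections.

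\textbf{The counterexample.} With $\Gamma$ the path $b$---$a$---$c$ we have $A_\Gamma\cong\langle a\rangle\times F(b,c)$. Take $x=1$, $d=a^{-1}$, $e=c^{-1}$, $f=b^{-1}$. The six vertices are $a,ab,abc,bc,b,1$. They are pairwise distinct, with $\len$ values $1,2,3,2,1,0$. Yet three generators occur, and $b,c$ do not commute.

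\textbf{Where the paper's proof fails.} The faulty step is the claim that $xa$ has descending neighbors $x$ and $xad$. That claim comes from taking the median of $xa$ and $xabd$. It is unjustified when $xabd=xa$ (i.e.\ $d=b^{-1}$), since then $xad=xab^{-1}$ may lie above $xa$. In your example the forward pass is harmless, because $d=a^{-1}$ is central. The ``analogous argument'' for $c$, however, reads the hexagon backwards. There the median vertex $xabd=b$ coincides with $xabcde$ (i.e.\ $e=c^{-1}$). The would-be descending neighbor $xabcde\,c^{-1}=bc^{-1}$ of $xabcde$ has length $2$. So the claim that $c$ commutes with $f=b^{-1}$ is unsupported, and in fact false. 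The paper then derives the ``in particular'' clause from this false assertion. Its proof of the only part used in Proposition~\ref{prop:one_point} is therefore also invalid as written, which makes your argument genuinely needed.

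\textbf{Your route for the ``in particular'' clause.} The paper propagates commutation relations around the hexagon via Proposition~\ref{prop:median_nbrs}, which breaks exactly when a median vertex lands on the hexagon. You instead use that $abc$ and $f^{-1}e^{-1}d^{-1}$ are shuffle-equivalent reduced words. This reduces everything to three permutation patterns and shows that triangle-freeness is needed in only one of them.

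\textbf{Two corrections to your sketch.}
\begin{enumerate}
\item In case $cab$, what must be checked is that $xa$ is adjacent to $xabcd=xca=xac$. Adjacency of $xabcde$ to $xabcd$ is automatic and proves nothing.
\item In case $cba$ it is false that ``both work'', and this is exactly where triangle-freeness enters. In $\Z^3$ the hexagon $a,ab,abc,bc,c,1$ has $d(c,ab)=d(a,bc)=3$, so neither option works. For $\Gamma$ triangle-free, argue as follows.
\begin{itemize}
\item The pairwise commuting letters $a,b,c$ involve at most two generators.
\item Reducedness of $abc$ together with $a\ne c$ forces $a$ and $c$ to have distinct underlying generators.
\item Reducedness then forces $b\in\{a,c\}$ as letters.
\item So the right-hand word is literally $caa$ or $cca$. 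The first is handled by your $cab$ computation, giving $xabcd$. The second is handled by your $bca$ computation, giving $xab$.
\end{itemize}
Thus exactly one of $xab$, $xabcd$ works.
\end{enumerate}
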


\begin{proof}
See Figure~\ref{fig:straight_hex} for an idea of this setup. Applying Proposition~\ref{prop:median_nbrs} to the vertex $xabc$ and its descending neighbors $xab$ and $xabcd$ we see that $cd=dc$. In particular $xab$ has descending neighbors $xa$ and $xabd$, so either $d=b^{-1}$ or the same argument says $bd=db$, so in either case $bd=db$. Now $xa$ has descending neighbors $x$ and $xad$ so once again we get that $da=ad$. Having shown that $d$ commutes with $a$, $b$, and $c$, since $abcdef=1$ we see $d$ commutes with $ef$, and hence with both $e$ and $f$. An analogous argument shows that $c$ commutes with all of $a,b,c,d,e,f$. Since $c\ne d^\pm$ and $a,b,e,f$ all commute with both $c$ and $d$, but $\Gamma$ has no $3$-cliques, we conclude that each of them must equal $c^\pm$ or $d^\pm$, and we are done.
\end{proof}

For example in $\Z^2=\langle a,b\rangle$ the relation $abba^{-1}b^{-1}b^{-1}=1$ satisfies the conditions of Lemma~\ref{lem:straight_hex}.

\begin{proposition}\label{prop:one_point}
If $S=\{g\}$ for $g\ne 1$ then $\dclk(S)$ is contractible.
\end{proposition}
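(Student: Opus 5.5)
The plan is to reduce to a union of two cones. By Corollary~\ref{cor:two_nbrs}, $g$ has either one or two descending neighbors.

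\emph{One descending neighbor.} If $g$ has a unique descending neighbor $h$, then $S=\{g\}$ is a set on which $\len$ is constant and whose elements have a unique common descending neighbor. Proposition~\ref{prop:unique_d_nbr} then applies verbatim; its proof explicitly covers $|S|=1$. So we may assume $g$ has exactly two descending neighbors $h_1\ne h_2$. By Proposition~\ref{prop:median_nbrs} these satisfy $d(h_1,h_2)=2$, and $g,h_1,h_2,m$ form a square with $m=\med(h_1,h_2,1)$.

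\emph{Two descending neighbors: the cover.} For $i=1,2$, let $U_i\subseteq \dclk(\{g\})$ be the subposet of those $S'$ with $d(x,h_i)\le 2$ for all $x\in S'$.
\begin{itemize}
\item Each $U_i$ is closed under passing to smaller elements of $\dclk(\{g\})$. Any $S'\supsetneq\{g\}$ that is a subset of a member of $U_i$ and still contains an element of length $<n=\len(g)$ lies in $U_i$.
\item Hence $|U_1|\cap|U_2|=|U_1\cap U_2|$.
\item If every $S'\in\dclk(\{g\})$ lies in $U_1\cup U_2$, then every chain lies in whichever $U_i$ contains its top element. This gives $|\dclk(\{g\})|=|U_1|\cup|U_2|$.
\end{itemize}

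\emph{Contractibility of the pieces.} On $U_i$, the map $S'\mapsto S'\cup\{h_i\}$ is a well-defined poset map. The result still has diameter $\le2$ and contains $h_i$, which has length $n-1<n$. It satisfies $S'\subseteq\phi(S')\supseteq\{g,h_i\}\in U_i$, so $U_i$ is contractible by Citation~\ref{cit:quillen_poset}. On $U_1\cap U_2$, the map $S'\mapsto S'\cup\{h_1,h_2\}$ works the same way, since $d(h_1,h_2)=2$, with base point $\{g,h_1,h_2\}$. A union of two contractible subcomplexes with contractible intersection is contractible, which finishes the proof once the covering claim holds.

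\emph{The covering claim (main obstacle).} The claim is that for every $S'\in\dclk(\{g\})$, one of $h_1,h_2$ is within distance $2$ of all of $S'$. All elements of $S'$ lie within $2$ of $g$, hence within $3$ of each $h_i$. Suppose the claim fails: there are $x,y\in S'$ with $d(x,h_1)=3$ and $d(y,h_2)=3$.

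\begin{itemize}
\item By parity (no odd cycles), $x$ and $y$ satisfy $d(x,g)=d(y,g)=2$, and their lengths lie in $\{n-2,n,n+2\}$.
\item First I would dispose of $x=y$. Such a point is at distance $3$ from both $h_i$. I expect a median argument to rule this out: $\med(x,h_1,h_2)$ is adjacent to $g$ or equal to it, and this forces $d(x,h_i)\le 1$ for some $i$.
\item For $x\ne y$, I would case on lengths.
\begin{itemize}
\item \emph{Both at length $n+2$.} Take the geodesics from $x$ and $y$ down to $g$ (through the ascending neighbors of $g$), together with the path $x$--$z$--$y$ where $d(x,y)\le 2$. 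The geodesic from $x$ must avoid the square at $h_1$ side, since $d(x,h_1)=3$. This gives a hexagon of the shape in Lemma~\ref{lem:straight_hex}. That lemma forces the relevant labels into $\{v,w\}^\pm$ with $v,w$ commuting, and then forces a common descending neighbor, contradicting the distance assumptions.
\item \emph{Cases with $x$ or $y$ at length $n$.} Here $x$ has a unique path to $g$ through some point $h_x$ of length $n-1$, by Observation~\ref{obs:no_weird_squares}. Then $d(x,h_1)=3$ forces $h_x=h_2$, and similarly $h_y=h_1$. The configuration $x,h_2,g,h_1,y$ plus the path from $y$ to $x$ should produce a hexagon as in Lemma~\ref{lem:hex}, giving a $3$-clique.
\item \emph{Cases with $x$ or $y$ at length $n-2$.} Such a point is a descending neighbor of $h_1$ or of $h_2$. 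The argument is the same: it is then within distance $1$ of one $h_i$, hence within $3$ of the other, and a low-point hexagon is again excluded via Lemma~\ref{lem:hex} or Lemma~\ref{lem:straight_hex}.
\end{itemize}
\end{itemize}

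This case analysis is where I expect the real work. I would organize it by the pair $(\len(x),\len(y))$, and invoke Lemma~\ref{lem:straight_hex} precisely in the case where a top vertex is shared.
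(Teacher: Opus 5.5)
Your reduction to the covering claim is sound and matches the paper's. Your lower sets $U_1,U_2$ are what the paper calls the stars of $\{g,h_1\}$ and $\{g,h_2\}$, and the cone maps and gluing are correct. The gap is in the covering claim itself. There you use only $g\in S'$ and $\diam(S')\le 2$, never the defining condition of $\dclk(\{g\})$ that $S'$ contains some $g_0$ with $\len(g_0)<n$. Without that condition the claim is false, and two of your sub-steps fail.

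Any $x$ with $d(x,g)=2$ and $\len(x)=n+2$ has $d(x,h_i)\ge \len(x)-\len(h_i)=3$ for both $i$. Concretely, take $\Gamma$ a single edge, so $A_\Gamma=\Z^2=\langle a,b\rangle$, with $g=ab$ and $x=ab^3$. Then $\med(x,a,b)=g$, yet $d(x,a)=d(x,b)=3$, so the median argument you propose for $x=y$ cannot work. Similarly, $\{ab,ab^3,a^2b^2\}$ has diameter $2$ and $d(ab^3,b)=d(a^2b^2,a)=3$. The hexagon $ab,ab^2,ab^3,a^2b^3,a^2b^2,a^2b$ satisfies the hypotheses of Lemma~\ref{lem:straight_hex}, and its conclusion holds there without any contradiction. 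So your ``both at length $n+2$'' case cannot be closed as described.

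The missing step is the paper's first reduction. Since $\len(g_0)\le n-1$ and $\diam(S')\le 2$, every element of $S'$ has length at most $n+1$. So your witnesses have length $n$ or $n-2$, and then $x=y$ is impossible: a point of length $n$ at distance $2$ from $g$ shares $h_1$ or $h_2$ with $g$ by Lemma~\ref{lem:exactly_two}, and a point of length $n-2$ lies below $h_1$ or $h_2$. Three cases remain.
\begin{itemize}
\item Two witnesses of length $n$: Lemma~\ref{lem:exactly_three} applied to $\{g,x,y\}$.
\item Two witnesses of length $n-2$: Lemma~\ref{lem:exactly_two} together with Lemma~\ref{lem:straight_hex}, with $g$ as the top vertex.
\item The mixed case, which is not ``the same argument''. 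Here $x$ has length $n$ and its only common descending neighbor with $g$ is $h_2$, while $y$ has length $n-2$ and lies below $h_1$ only. The middle vertex $h$ of a length-$2$ path from $x$ to $y$ is then neither $h_1$ nor $h_2$. The hexagon $x,h_2,g,h_1,y,h$ has heights $n,n-1,n,n-1,n-2,n-1$, which fits neither hexagon lemma.
\end{itemize}
The paper handles the mixed case by passing to the medians $m_1=\med(h,h_2,1)$ and $m_2=\med(h_1,h_2,1)$, so that $y,h_1,m_2,h_2,m_1,h$ has the shape of Lemma~\ref{lem:hex}. It then applies Observation~\ref{obs:no_weird_squares} to the square $m,h,y,h_1$ to rule out the degenerate case $m_1=m_2=m$. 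Your sketch needs this argument, or an equivalent one, to be complete.
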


\begin{proof}
By Corollary~\ref{cor:two_nbrs}, $g$ has at most two descending neighbors. If it has exactly one descending neighbor then Proposition~\ref{prop:unique_d_nbr} says $\dclk(\{g\})$ is contractible, so suppose $g$ has exactly two descending neighbors, say $h_1$ and $h_2$. We claim that $\dclk(\{g\})$ is the union of the stars of $\{g,h_1\}$ and $\{g,h_2\}$. Since these stars are contractible and their intersection is the star of $\{g,h_1,h_2\}$, which is also contractible, this will show that $\dclk(\{g\})$ is contractible.

Let $S'\in \dclk(\{g\})$ be arbitrary, so $g\in S'$, $d(g',g'')\le 2$ for all $g',g''\in S'$, and there exists $g_0\in S'$ with $\len(g_0)<\len(g)$. We have to show that either $S'\cup\{h_1\}$ or $S'\cup\{h_2\}$ lies in $\dclk(\{g\})$, i.e., that either all the elements of $S'$ are within distance $2$ of $h_1$, or are all within distance $2$ of $h_2$. If $S'$ contains an element with $\len$ value $\len(g)+1$ then every $g_0\in S'$ satisfying $\len(g_0)<\len(g)$ must actually satisfy $\len(g_0)=\len(g)-1$, and hence $g_0$ is either $h_1$ or $h_2$ and we are done. Now assume $S'$ has no such elements, so the $\len$ values of elements of $S'$ can only be $\len(g)$, $\len(g)-1$, or $\len(g)-2$. The only elements with $\len$ value $\len(g)-1$ that can appear in such an $S'$ are $h_1$ and $h_2$, so in this case we are done. Now assume the elements of $S'$ only have $\len$ values $\len(g)$ (\emph{high} elements) and $\len(g)-2$ (\emph{low} elements).

First suppose $g$ is the only high element of $S'$. Every low element must be a descending neighbor of $h_1$ or $h_2$ (or both), so in particular if there is only one low element then we are done. Now suppose $g_0\ne g_0'$ are low elements such that $g_0$ is a descending neighbor of $h_1$ and $g_0'$ is a descending neighbor of $h_2$. Since $d(g_0,g_0')=2$, by Lemma~\ref{lem:exactly_two} $g_0$ and $g_0'$ have a common descending neighbor $h$. Now the vertices $h,g_0,h_1,g,h_2,g_0'$ satisfy the hypotheses of Lemma~\ref{lem:straight_hex}, and so $g_0$ and $g_0'$ are either both descending neighbors of $h_1$ or of $h_2$. This works for every pair of low elements, so we conclude that either all the low elements are distance $1$ to $h_1$, or all distance $1$ to $h_2$, and we are done with this case.

From now on we assume there is at least one high element in $S'$ other than $g$. First we claim that all the high elements share either $h_1$ or $h_2$ as a descending neighbor. Let $g_1\in S'\setminus\{g\}$ be high. By Proposition~\ref{prop:median_nbrs} there is a path of length $2$ from $g_1$ to $g$ passing through a descending neighbor of $g$, either $h_1$ or $h_2$. If $g_1'$ is a different high element of $S'\setminus\{g\}$, then by Lemma~\ref{lem:hex} $g_1$ and $g_1'$ must both have either $h_1$ or $h_2$ as a descending neighbor. Since a given $g_1$ cannot have both $h_1$ and $h_2$ as descending neighbors by Observation~\ref{obs:no_weird_squares} (since $g$ already does), we conclude that all the high elements share either $h_1$ or $h_2$ as a descending neighbor, and none of them (except $g$) have the other as a descending neighbor. Without loss of generality say they all share $h_1$.

Now it just remains to prove that all the low elements are distance $1$ to $h_1$. Suppose there is a low element $g_0$ that is a descending neighbor of $h_2$ but not of $h_1$. Let $g_1\ne g$ be a high element, so $h_1$ is a descending neighbor of $g_1$ and $h_2$ is not. Since $d(g_1,g_0)=2$ and $g_0$ is not a descending neighbor of $h_1$, there is some $h\not\in\{h_1,h_2\}$ on a length-$2$ path from $g_1$ to $g_0$. Finally, let $m_1=\med(h,h_1,1)$ and $m_2=\med(h_1,h_2,1)$, so thanks to Proposition~\ref{prop:median_nbrs} if the vertices $g_0,h_2,m_2,h_1,m_1,h$ are all distinct then they satisfy the hypotheses of Lemma~\ref{lem:hex}, violating that $\Gamma$ has no $3$-cliques; see Figure~\ref{fig:last_thing} for a picture keeping track of all these vertices. Thus these vertices cannot be all distinct, which considering their $\len$ values and knowing that $h,h_1,h_2$ are distinct and that $g_0$ is not a descending neighbor of $h_1$ tells us that $m_1=m_2$, call it $m$. But now the vertices $m,h,g_0,h_2$ violate Observation~\ref{obs:no_weird_squares}, and we have reached our desired contradiction.
\end{proof}

\begin{figure}[htb]\label{fig:last_thing}
\begin{tikzpicture}[line width=1pt]
\draw (1,-1) -- (2,0) -- (3,-1);
\draw (0,0) -- (1,1) -- (2,0) -- (3,1) -- (4,0);
\draw[white, line width=5pt] (0,0) -- (2,-1) -- (4,0);
\draw (0,0) -- (2,-1) -- (4,0);
\draw (0,0) -- (1,1) -- (2,0) -- (3,1) -- (4,0);
\draw (0,0) -- (1,-1)   (3,-1) -- (4,0);

\filldraw (0,0) circle (1.5pt);
\filldraw (1,1) circle (1.5pt);
\filldraw (2,0) circle (1.5pt);
\filldraw (3,1) circle (1.5pt);
\filldraw (4,0) circle (1.5pt);
\filldraw (2,-1) circle (1.5pt);
\filldraw (1,-1) circle (1.5pt);
\filldraw (3,-1) circle (1.5pt);

\node at (1,1.3) {$g_1$};
\node at (3,1.3) {$g$};
\node at (2.5,0) {$h_1$};
\node at (4.4,0) {$h_2$};
\node at (-0.3,0) {$h$};
\node at (2,-1.3) {$g_0$};
\node at (1,-1.3) {$m_1$};
\node at (3,-1.3) {$m_2$};
\end{tikzpicture}
\caption{A visualization of the last part of the proof of Proposition~\ref{prop:one_point}, before we realize that $m_1=m_2$ (and before we realize that is also impossible).}
\end{figure}

\begin{proof}[Proof of Theorem~\ref{thrm:main_raags}]
By Theorem~\ref{thrm:main_general} it suffices to prove that for all non-empty finite $\{1\}\ne S\subseteq G$ such that $\len$ is constant on $S$, $\dclk(S)$ is contractible. If $|S|\ge 2$ then this follows from Proposition~\ref{prop:unique_d_nbr}, and if $|S|=1$ then this follows from Proposition~\ref{prop:one_point}.
\end{proof}

\bibliographystyle{alpha}

\begin{thebibliography}{CCG{\etalchar{+}}25}

\bibitem[BB97]{bestvina97}
Mladen Bestvina and Noel Brady.
\newblock Morse theory and finiteness properties of groups.
\newblock {\em Invent. Math.}, 129(3):445--470, 1997.

\bibitem[BH99]{bridson99}
Martin~R. Bridson and Andr\'{e} Haefliger.
\newblock {\em Metric spaces of non-positive curvature}, volume 319 of {\em Grundlehren der mathematischen Wissenschaften [Fundamental Principles of Mathematical Sciences]}.
\newblock Springer-Verlag, Berlin, 1999.

\bibitem[Bro87]{brown87}
Kenneth~S. Brown.
\newblock Finiteness properties of groups.
\newblock In {\em Proceedings of the {N}orthwestern conference on cohomology of groups ({E}vanston, {I}ll., 1985)}, volume~44, pages 45--75, 1987.

\bibitem[CCG{\etalchar{+}}25]{chalopin25}
J\'er\'emie Chalopin, Victor Chepoi, Anthony Genevois, Hiroshi Hirai, and Damian Osajda.
\newblock Helly groups.
\newblock {\em Geom. Topol.}, 29(1):1--70, 2025.

\bibitem[Cha]{charney}
Ruth Charney.
\newblock An introduction to right-angled {A}rtin groups.
\newblock arXiv:0610668.

\bibitem[Gen]{genevois}
Anthony Genevois.
\newblock Why {CAT(0)} cube complexes should be replaced with median graphs.
\newblock arXiv:2309.02070.

\bibitem[GSS]{gupta}
Raju~Kumar Gupta, Sourav Sarkar, and Samir Shukla.
\newblock On the {V}ietoris-{R}ips complexes of integer lattices.
\newblock arXiv:2511.04238.

\bibitem[LS]{li}
Kevin Li and Luis~Jorge {S\'anchez Salda\~na}.
\newblock Contractible {R}ips complexes of groups via metric gluings.
\newblock arXiv:2608.24279.

\bibitem[Qui78]{quillen78}
Daniel Quillen.
\newblock Homotopy properties of the poset of nontrivial {$p$}-subgroups of a group.
\newblock {\em Adv. in Math.}, 28(2):101--128, 1978.

\bibitem[Vie27]{vietoris27}
L.~Vietoris.
\newblock {\"U}ber den h\"oheren {Z}usammenhang kompakter {R}\"aume und eine {K}lasse von zusammenhangstreuen {A}bbildungen.
\newblock {\em Math. Ann.}, 97(1):454--472, 1927.

\bibitem[Vir25]{virk25}
{\v Z}iga Virk.
\newblock Contractibility of the {R}ips complexes of integer lattices via local domination.
\newblock {\em Trans. Amer. Math. Soc.}, 378(3):1755--1770, 2025.

\bibitem[VZ21]{varisco21}
Marco Varisco and Matthew C.~B. Zaremsky.
\newblock Equivariant {M}orse theory on {V}ietoris-{R}ips complexes and universal spaces for proper actions.
\newblock {\em Bull. Lond. Math. Soc.}, 53(6):1724--1739, 2021.

\bibitem[Zar22]{zaremsky22}
Matthew C.~B. Zaremsky.
\newblock Bestvina-{B}rady discrete {M}orse theory and {V}ietoris-{R}ips complexes.
\newblock {\em Amer. J. Math.}, 144(5):1177--1200, 2022.

\bibitem[Zar26]{zaremsky26}
Matthew C.~B. Zaremsky.
\newblock Contractible {V}ietoris-{R}ips complexes of {${\mathbb Z}^n$}.
\newblock {\em Proc. Amer. Math. Soc.}, 154(2):503--508, 2026.

\end{thebibliography}
\newcommand{\etalchar}[1]{$^{#1}$}

\end{document}